\newtheorem{thm}{Theorem}[section]
\newtheorem{prop}[thm]{Proposition}
\newtheorem{cor}[thm]{Corollary}
\newtheorem{lemma}[thm]{Lemma}
\theoremstyle{cupdefn}
\newtheorem{defn}[thm]{Definition}
\theoremstyle{cuprem}
\newtheorem{rem}[thm]{Remark}
\numberwithin{equation}{section}
\newcommand{\R}{\mathbb{R}}
\newcommand{\N}{\mathbb{N}}
\newcommand{\Z}{\mathbb{Z}}
\newcommand{\C}{\mathbb{C}}
\begin{document}

\title[Cohomology of orbit of involutions on Wall manifolds]{Cohomology algebra of orbit spaces of free involutions on some Wall manifolds}

%\author{C. Biasi}
\author[T. F. V. Paiva]{Thales Fernando Vilamaior Paiva}
\address{Universidade Federal de Mato Grosso do Sul, C\^ampus de Aquidauana \\ CEP 79200-000, Aquidauana - MS, Brazil }
\email{thales.paiva@ufms.br}

\author[E. L. dos Satos]{Edivaldo Lopes dos Santos}
\address{Departamento de Matem\'atica\\
	Centro de Ci{\^e}ncias Exatas e Tecnologia\\
	Universidade Federal de S{\~a}o Carlos \\
	CP 676, CEP 13565-905, S\~ao Carlos, SP, Brazil}
\email{edivaldo@dm.ufscar.br}

\thanks{The second author was supported by the Projeto Tem{\'a}tico: Topologia Alg{\'e}brica, Geom{\'e}trica e Diferencial, FAPESP Process Number 2018/17240-8.}

%\thanks{The fourth author is supported by FAPESP, process number: 2018/17240-8.}

\subjclass[2010]{Primary 57S17; Secondary 55R20, 57S25.} 
%\subjclass{(2000): Primary ; Secondary 55M20, 47H09.} 

\keywords{Wall manifolds, cohomology algebra, free involution, orbit space.}

\begin{abstract} 
In this paper, we investigate the existence of free involutions on some Wall manifolds and we compute the mod 2 cohomology algebra of the correspondent orbit space.
\end{abstract}

\maketitle

\section{introduction}

Let $X$ be a topological space and let $G$ be a topological group. A relevant question associated to a pair $(X,G)$ is to determine when there is a continuous free action of $G$ on $X,$  as the classical example proposed by Hopf in \cite{hopf} of classify all manifolds with universal cover homeomorphic to the $n-$dimensional sphere $S^{n},$ which is equivalent to determine all finite groups that can act freely on $S^{n}.$ In \cite{milnor57} J. Milnor provided an answer in this direction, showing that the symmetric group $\mathbb{S}_{3}$ cannot act freely on $S^{n}.$ A possible generalization can be made considering $X$ as a product of spheres, as proposed by Dotzel et al. in \cite{ronald} (for free actions of $\Z_{p}$ on $S^{m}\times S^{n}$) and by Jahren and Kwasik in \cite{jahren} (for free involutions on $S^{1}\times S^{n}$). 

When $G$ acts freely on $X$ a question that arises is about the nature of the corresponding orbit space, as in the standard examples: the real, complex and quaternionic projective spaces, which are orbit spaces of standard actions of $\Z_{2},$ $S^{1}$ and $S^{3}$ on spheres, respectively. The general examples that may appear suggest that calculating the cohomology of these orbit spaces can be complicated, particularly for compact manifolds other than spheres, as done, for example, in the work of Pergher et al. \cite{Pergher} about involutions on spaces of cohomology type $(a,b)$ and Singh \cite{singh13} for involutions on lens spaces. This paper concerns this question for $X$ a Wall manifold of the type $Q(m,n)$ and $G=\Z_{2}.$

The Wall manifolds were introduced by C. T. C. Wall in \cite{wall60}, where it was shown that these manifolds give generators in odd dimensions for the unoriented bordism ring, as occur with the Dold manifolds and the Milnor manifolds, as shown by Dold in \cite{dold56} and Milnor in \cite{milnor65}, respectively. Therefore, it is a relevant question to determine algebraic invariants of these manifolds, as done for example by Mukerjee in \cite{Mukerjee03, Mukerjee04, Mukerjee06}.

For free actions of $\Z_{2}$ (or equivalently free involutions) on Dold ma\-nifolds, the problem was answered in 2018 by Morita et al. in \cite{ana} for the cases $P(1,\text{odd}),$ and for all cases in 2019 by Dey in \cite{pinka} using slight different techniques. About the Milnor manifolds, the problem was solved in 2019 by Dey and Singh in \cite{DeySingh}. The main result that we present in this paper is the complete description of the cohomology of the orbit space of free involution on a Wall manifold of the type $Q(1,\text{odd}),$ according to Theorem \ref{main}.

\begin{thm}\label{main}
Let us consider $G=\Z_{2}$ acting freely on a compact Hausdorff space $X$ with mod 2 cohomology of a Wall manifold $Q(1,n),$ with $n\geq 3$ odd and such that the induced action on the mod 2 cohomology is trivial. Then 
$$H^{\ast}(X/G;\Z_{2})\cong \frac{\Z_{2}[\alpha,\beta,\gamma,\delta]}{\langle \alpha^{3}, \beta^{3}, \gamma^{2},\beta^{2}+\beta\gamma, \delta^{\frac{n+1}{2}}\rangle},$$ 
where $\deg \alpha=\deg \beta=\deg \gamma=1$ and $\deg \delta=4.$
\end{thm}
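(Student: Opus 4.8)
The plan is to use the Leray–Serre spectral sequence of the Borel fibration associated to the free $\Z_2$-action, which is the standard tool for computing cohomology of orbit spaces in this setting (as in the works of Morita–Pergher and Dey cited above). Let me think through how this would go.

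Since $G = \Z_2$ acts freely on $X$, the orbit space $X/G$ is homotopy equivalent to the Borel construction $X_G = (X \times S^\infty)/G = (X \times EG)/G$, and there is a fibration $X \hookrightarrow X_G \to BG = \R P^\infty$. The base has cohomology $H^*(BG;\Z_2) = \Z_2[t]$ with $\deg t = 1$. Because the action on $H^*(X;\Z_2)$ is trivial, the system of local coefficients is simple, so the $E_2$ page is the tensor product $E_2^{p,q} = H^p(BG;\Z_2) \otimes H^q(X;\Z_2)$. The first thing I would do is pin down the ring structure of $H^*(Q(1,n);\Z_2)$ as a $\Z_2$-algebra — from Wall's construction (or the cited computations of Mukerjee) this should be a quotient of a polynomial algebra on generators in low degrees together with a degree-4 class whose powers give the top of the manifold.

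The heart of the argument is determining the differentials. I would identify generators $a, b, c$ in degree 1 and $d$ in degree 4 of $H^*(X;\Z_2)$ and track how they interact with $t$. Freeness of the action forces the spectral sequence to not collapse — in fact, since $X/G$ is a finite-dimensional space while $\Z_2[t]$ is infinite-dimensional, some nonzero differential must kill the $t$-tower, and this is exactly what produces the relations like $\alpha^3$, $\gamma^2$ and $\delta^{(n+1)/2}$ in the target. The key step is to show that the transgression (or an appropriate early differential $d_r$) sends one of the degree-1 generators, say the "first coordinate" class $a$ coming from the $S^1$-factor structure of $Q(1,n)$, to a power of $t$; concretely I expect $d_2$ or $d_3$ to act nontrivially, analogous to the Dold-manifold case where $d_2(c) = t^2$ or similar. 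Once one differential is established, the multiplicative (Leibniz) structure of the spectral sequence propagates it across the whole page, and convergence then forces the $E_\infty$ page to be finite-dimensional.

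After computing $E_\infty$, the final step is to solve the extension problem: the associated graded algebra $E_\infty$ must be lifted to the actual ring $H^*(X/G;\Z_2)$, and I would rename the surviving generators as $\alpha, \beta, \gamma, \delta$ and verify the stated relations $\alpha^3 = \beta^3 = \gamma^2 = 0$, $\beta^2 + \beta\gamma = 0$, and $\delta^{(n+1)/2} = 0$ hold and generate the full ideal. I expect the main obstacle to be precisely this combination of nailing down the correct nonzero differential and then resolving the possibly-nontrivial multiplicative extensions in characteristic 2; the mixed relation $\beta^2 + \beta\gamma = 0$ in particular signals a nontrivial extension where an $E_\infty$ generator's square is not zero but equals a product, so I would need to use naturality of the fibration and the known $\Z_2$-algebra structure of the base $\R P^\infty$ together with Steenrod operations to fix these ambiguities.
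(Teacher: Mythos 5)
Your general framework (Borel construction, Leray--Serre spectral sequence with simple coefficients, non-collapse forced by freeness and finite-dimensionality of $X/G$) is exactly the paper's, but the heart of your plan --- the identification of the nonzero differential --- is wrong, and it cannot be repaired as stated. You propose that a degree-one generator of $H^{*}(X)$ transgresses to a power of $t$, ``analogous to the Dold-manifold case where $d_{2}(c)=t^{2}$.'' This is impossible here. First, it contradicts the asserted answer: $H^{1}(X/G)\cong\Z_{2}^{3}$ (spanned by $\alpha,\beta,\gamma$), while $E_{\infty}^{1,0}$ contributes at most one copy of $\Z_{2}$ and $E_{\infty}^{0,1}\subseteq H^{1}(X)\cong\Z_{2}^{2}$; a nonzero $d_{2}$ on any degree-one class would cut $E_{\infty}^{0,1}$ down and force $\dim H^{1}(X/G)\leq 2$. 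Second, the ring relations of the fiber rule it out directly: here $H^{*}(Q(1,n))=\Z_{2}[x,c,d]/\langle x^{2},\,c^{2}+cx,\,d^{n+1}\rangle$ with $\deg x=\deg c=1$, $\deg d=2$ (note you misremembered this ring --- there are two degree-one generators and one degree-\emph{two} generator, no degree-four generator in $H^{*}(X)$), and, for instance, if $d_{2}(1\otimes x)\neq 0$ and $d_{2}(1\otimes c)=0$ then $t^{2}\otimes c=d_{2}(1\otimes cx)=d_{2}(1\otimes c^{2})=0$, a contradiction; the other sign patterns die similarly. In the paper the degree-one classes $x,c$ are permanent cocycles (giving $\beta,\gamma$ with $i^{*}(\beta)=c$, $i^{*}(\gamma)=x$), and the unique nonzero differential is $d_{3}(1\otimes d)=t^{3}\otimes 1$ on the degree-two class; this is what produces $\alpha^{3}=0$, while $\delta$ is a lift of $d^{2}$, which survives because $d_{3}(1\otimes d^{2})=2(1\otimes d)\,d_{3}(1\otimes d)=0$ in characteristic $2$. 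That also explains why $\delta$ has degree $4$ even though the fiber has no degree-four generator.

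The second gap is that your proposal has no mechanism for proving \emph{uniqueness} of the differential pattern, which is the bulk of the paper's work (its Section 4). Besides excluding differentials on degree-one classes, one must exclude $d_{2}(1\otimes d)\neq 0$: this case is consistent with the Leibniz rule, and ruling it out requires computing the entire $E_{3}$-page for each of the three possible values $t^{2}\otimes x$, $t^{2}\otimes c$, $t^{2}\otimes(x+c)$, showing that every $d_{3}$ then vanishes, and deriving a contradiction with Bredon's theorem that a free $\Z_{2}$-action on a space with vanishing cohomology above degree $2n+2$ has orbit space with the same vanishing. ``Once one differential is established, Leibniz propagates it'' does not substitute for this case analysis, because several mutually incompatible patterns each propagate consistently. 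Finally, a small point: the relation $\beta^{2}+\beta\gamma=0$ is not a multiplicative extension problem requiring Steenrod operations; it is inherited directly from the fiber relation $c^{2}=cx$ through the edge homomorphism $i^{*}$, and the whole ring structure is pinned down by the two edge homomorphisms $\pi^{*}$ and $i^{*}$. Also note the theorem's hypothesis is only that $X$ has the mod $2$ cohomology of $Q(1,n)$, so arguments appealing to the actual $S^{1}$-bundle structure of the Wall manifold (your ``first coordinate class'') are not available.
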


The choice of the case $Q(1,n),$ as an initial stage, is motivated by the work of Khare in \cite{Khare}, where he shows that $Q(m,n)$ bounds, if and only if, $n$ is odd or $n	=0$ and $m$ is odd. Therefore, these spaces can admit a free involution and particularly the choice $m=1$ is convenient, because of the relationships established between the generators of the cohomology ring, while for arbitrary values of $m$ may be very difficult to find obstructions for the large numbers of possible cohomology ring structures. 

For this purpose this paper is organized as follows. In Section \ref{Section2}, we established the results on spectral sequences and the Borel fibration which are the main tools that we use. In Section \ref{Section3}, we define the Wall manifolds, as a mapping torus of a Dold manifold, and do some hypotheses about the dimension to ensure that these manifolds admit a free involution and that the system of local coefficients induced by Borel fibration is simple. The Section \ref{Section4} is dedicated to the proof that there is only one possible homotopy class of the orbit space of free involutions on some Wall manifolds and in Section \ref{Section5}, we proved the main Theorem \ref{main}.

\section{Preliminaries}\label{Section2}

%%%%%%%%%%%%%%%%%%%%%%%%%%%%%%%%%%%%%%%%%%%%%%%%%%%%%%%%%%%%%%%%%%%%%%%%%%%%%%%%%%%%%%%%%%%%%%%%%%%%%%%%%%%%

\indent\indent Let $G$ be a compact Lie group, and let $X$ be a paracompact $G-$space. We denote by $G\hookrightarrow E_{G}\rightarrow B_{G}$ the universal $G-$bundle, as in \cite{milnor56}, and by 
\begin{equation}
\begin{tikzcd}
X \arrow[hookrightarrow]{r} & X_{G} \arrow{r}{\pi} & B_{G} 
\end{tikzcd}
\end{equation}
the Borel fibration (\cite{dieck87}, Chapter 3) associated, where\linebreak $X_{G}=(X\times E_{G})/G$ is the orbit space of the diagonal action and $\pi$ is induced by the projection $X\times E_{G}\to E_{G}.$ 

The projection at the first coordinate $X\times E_{G}\to X$ is $G-$equivariant, therefore it induces the principal $G-$bundle
\begin{equation}
\begin{tikzcd}
E_{G} \arrow[hookrightarrow]{r} & X_{G} \arrow{r}{p} & X/G.
\end{tikzcd}
\end{equation}

When $G$ acts freely on $X,$ the fiber $E_{G}$ is contractible so $p$ is a homotopy equivalence (\cite{dieck87}, p. 180), which induces a natural isomorphism 
\begin{equation}\label{is}
H^{\ast}(X_{G};R)\cong H^{\ast}(X/G;R),
\end{equation} 
for all commutative ring with unit $R.$ 

In this context, by the Leray-Serre spectral sequence (\cite{mcleary01}, Theorem 5.2) there is a first quadrant cohomological spectral sequence $\{E^{\ast,\ast}_{r},d_{r}\}$ converging to $H^{\ast}(X_{G};R)\cong H^{\ast}(X/G;R),$ as an algebra, such that 
\begin{equation}\label{ss}
E_{2}^{p,q}\cong H^{p}(B_{G};\mathcal{H}^{q}(X;R)),
\end{equation}
the cohomology of $B_{G}$ with local coefficients in the cohomology of the fiber of $\pi.$ Precisely, $\mathcal{H}^{\ast}(X;R)$ denotes the cohomology of $X$ twisted by the action of the fundamental group of $B_{G}.$  

When $\pi_{1}(B_{G})$ acts trivially on $H^{\ast}(X;R)$ the system of local\linebreak coefficients is simple (\cite{mcleary01}, Proposition 5.6) then (\ref{ss}) takes the sui\-table form 
\begin{equation}\label{pt}
E_{2}^{p,q}\cong H^{p}(B_{G};R)\otimes_{R} H^{q}(X;R).
\end{equation}

Besides that by (\cite{mcleary01}, Theorem 5.9), the homomorphisms
\begin{equation}\label{hom1}
H^{q}(B_{G};R)=E_{2}^{q,0}\twoheadrightarrow \cdots\twoheadrightarrow E_{q}^{q,0}\twoheadrightarrow E_{q+1}^{q,0}=E_{\infty}^{q,0}\subseteq H^{q}(X_{G};R)
\end{equation}	
and
\begin{equation}\label{hom2}
H^{q}(X_{G};R)\twoheadrightarrow E_{\infty}^{0,q}=E_{q+1}^{0,q}\subseteq E_{q}^{0,q}\subseteq \cdots\subseteq E_{2}^{0,q}=H^{q}(X;R)\end{equation}
are equal to the induced homomorphisms 
$\pi^{\ast}:H^{q}(B_{G};R)\to H^{q}(X_{G};R)$ and $i^{\ast}:H^{q}(X_{G};R)\to H^{q}(X;R),$ respectively.

\begin{rem}
\emph{To simplify the notations, throughout the text we will always denote by $H^{\ast}(\text{ - })= \check{H}^{\ast}(\text{ - };\Z_{2})$ the $\check{\text{C}}$ech cohomology with coefficient in} $\Z_{2}$ \rm{(}\cite{bredon}, Section 3.6\rm{)}, and we will indicate the cup product $a\smile b$ simple by $a\cdot b=ab.$
\end{rem}

\begin{thm}[\cite{bredon}, p. 374, Theorem 1.5]\label{bredon2}
Let $X$ be a paracompact Hausdorff space such that $H^{j}(X)=0,$ for all $j> n,$ for some $n\in\N.$ If $\Z_{2}$ acts freely on $X,$ then $H^{j}(X/\Z_{2})=0,$ for all $j>n.$
\end{thm}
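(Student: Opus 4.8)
The plan is to sidestep the Borel fibration over $B_{G}=\R P^{\infty}$—whose base already has unbounded mod $2$ cohomology—and to use instead that a free involution exhibits $X$ as a double cover $\pi\colon X\to X/G$. To this covering I would attach its \emph{Gysin long exact sequence} with $\Z_{2}$ coefficients. In the $\check{\text{C}}$ech/sheaf framework appropriate to paracompact spaces, this is the cohomology sequence of the short exact sequence of sheaves on $X/G$
\[
0\longrightarrow \Z_{2}\longrightarrow \pi_{\ast}\Z_{2}\longrightarrow \Z_{2}\longrightarrow 0,
\]
where $\pi_{\ast}\Z_{2}$ is the rank two direct image of the constant sheaf. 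Since $\pi$ is a finite covering (hence a local homeomorphism), one has $H^{\ast}(X/G;\pi_{\ast}\Z_{2})\cong H^{\ast}(X)$, and the connecting homomorphism is cup product with the class $w\in H^{1}(X/G)$ classifying the cover. Equivalently, this is the Gysin sequence of the associated $S^{0}$-bundle. I therefore obtain the exact sequence
\[
\cdots\to H^{j}(X)\to H^{j}(X/G)\xrightarrow{\,-\smile w\,} H^{j+1}(X/G)\to H^{j+1}(X)\to\cdots .
\]

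The periodicity step is then immediate. For $j>n$ the hypothesis gives $H^{j}(X)=0$, so by exactness cup product with $w$ is injective on $H^{j}(X/G)$; and since $H^{j+1}(X)=0$ as well, the same map is surjective. Hence $-\smile w\colon H^{j}(X/G)\to H^{j+1}(X/G)$ is an isomorphism for every $j>n$, giving a tower of isomorphisms $H^{n+1}(X/G)\cong H^{n+2}(X/G)\cong\cdots$.

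It remains to collapse this tower, and this is the step I expect to be the genuine obstacle. Multiplication by $w$ being an isomorphism in all high degrees only yields eventual \emph{periodicity}; to pass to actual \emph{vanishing} I must know that $H^{j}(X/G)=0$ for some—equivalently, for all sufficiently large—$j$. This is where finiteness of dimension has to enter. Because $\pi$ is a local homeomorphism, $X/G$ is locally modelled on $X$, so it inherits the finite (mod $2$ cohomological) dimension of $X$, this dimension being a local invariant; consequently $H^{j}(X/G)=0$ once $j$ exceeds that dimension. Feeding this into the tower of isomorphisms forces $H^{j}(X/G)=0$ for \emph{every} $j>n$, which is the assertion. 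I would emphasize that this dimensional input is truly indispensable—vanishing of the constant-sheaf cohomology alone does not bound the orbit space, as the free antipodal action on $S^{\infty}$ shows—so the hypothesis is to be read with $X$ of finite dimension, which is automatic in the setting of Theorem \ref{main} where $X$ is compact. Apart from this, the only remaining technical points are the sheaf-theoretic justification of the Gysin sequence on a paracompact base and the identification of the connecting map with cup product by $w$.
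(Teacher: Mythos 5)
First, a point of comparison that isn't really a comparison: the paper contains no proof of Theorem \ref{bredon2} at all --- it is imported verbatim (and, as it happens, slightly misquoted) from Bredon's book --- so your argument has to stand on its own. Its first two stages do: the sheaf sequence $0\to\Z_{2}\to\pi_{\ast}\Z_{2}\to\Z_{2}\to 0$, the identification $H^{\ast}(X/G;\pi_{\ast}\Z_{2})\cong H^{\ast}(X)$, the connecting map as $\smile w$, and the resulting periodicity isomorphism $\smile w\colon H^{j}(X/G)\to H^{j+1}(X/G)$ for $j>n$ are all correct and are the standard route. The genuine gap is your collapse step. You rightly notice that the statement as printed (paracompact Hausdorff only) is false --- the antipodal action on $S^{\infty}$ kills it --- so an extra finiteness hypothesis must be read in; but the one you substitute, finite mod $2$ cohomological dimension, is the wrong one, and your claim that it ``is automatic in the setting of Theorem \ref{main} where $X$ is compact'' is false. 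Compact Hausdorff spaces need not be finite-dimensional: $Q(1,n)\times I^{\omega}$ ($I^{\omega}$ the Hilbert cube) is compact Hausdorff, carries the free involution $F\times 1$, has exactly the mod $2$ $\check{\text{C}}$ech cohomology of $Q(1,n)$ by continuity of $\check{\text{C}}$ech cohomology under inverse limits, yet has infinite cohomological dimension (it contains closed copies of $I^{k}$ for every $k$). So the theorem you actually prove does not cover the spaces Theorem \ref{main} is about, while the cited result does.

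The hypothesis in Bredon's Theorem 1.5 is that $X$ is \emph{finitistic} --- every open cover admits a finite-dimensional open refinement --- which is implied by compactness (a finite subcover has finite order) and by paracompactness plus finite dimension, but not conversely; this is exactly the dropped hypothesis your $S^{\infty}$ example detects. Your ``dimension is a local invariant'' argument cannot be repaired to reach this generality, and the standard finish is different. First, $X/G$ is again paracompact and finitistic: pull an open cover of $X/G$ back along $\pi$, refine it by a finite-dimensional open cover $\mathcal{W}$ of $X$, and push down; since $\pi$ is open and two-to-one, $\{\pi(W):W\in\mathcal{W}\}$ is an open refinement whose order is at most twice that of $\mathcal{W}$. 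Second, on a paracompact finitistic space the covers with finite-dimensional nerves are cofinal in the $\check{\text{C}}$ech system, and the canonical maps $H^{\ast}(N(\mathcal{U}))\to \check{H}^{\ast}(X/G)$ are ring homomorphisms; hence every positive-degree class is nilpotent, and in particular $w^{k}=0$ for some $k$. Your periodicity then does the rest: a nonzero $a\in H^{j}(X/G)$ with $j>n$ would satisfy $a\smile w^{k}\neq 0$ for all $k$, contradicting $w^{k}=0$. With this replacement for your final paragraph the proof is complete and in the generality the paper actually needs.
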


We will use this results to get the cohomology algebra of the orbit space $X/G,$ where $G=\Z_{2}$ and $X$ is a Wall manifold of the type $Q(m,n)$ to certain natural numbers $m$ and $n,$ which we will define posteriorly.

%%%%%%%%%%%%%%%%%%%%%%%%%%%%%%%%%%%%%%%%%%%%%%%%%%%%%%%%%%%%%%%%%%%%%%%%%%%%%%%%%%%%%%%%%%%%%%%%%%%%%%%%%%%%

\section{Wall manifolds}\label{Section3}

%%%%%%%%%%%%%%%%%%%%%%%%%%%%%%%%%%%%%%%%%%%%%%%%%%%%%%%%%%%%%%%%%%%%%%%%%%%%%%%%%%%%%%%%%%%%%%%%%%%%%%%%%%%%

Let $m$ and $n$ be non negative integers. In 1956 A. Dold \cite{dold56} defined the differentiable manifolds $P(m,n),$ as the orbit space of the free involution
\begin{equation}\label{dd1}
\begin{array}{rccc}
T:&S^{m}\times \C P^{n}&\to & S^{m}\times\C P^{n}\\
  & (x,[z])& \mapsto & (-x,[\overline{z}])\\
\end{array}. 
\end{equation}

Dold further showed that the mod 2 cohomology ring of $P(m,n)$ is a graduated polynomial ring in the variables $c\in H^{1}(P(m,n))$ and $d\in H^{2}(P(m,n)),$ truncated by the relations $c^{m+1}=0$ and $d^{n+1}=0.$

In 1960, Wall defined in \cite{wall60} a  new set of generators to $\mathfrak{R}_{\ast},$ using the following general construction: Let $M$ be a closed differentiable manifold of dimension $n$ and let $f:M\to M$ be a diffeomorphism. On the cartesian product $M\times [0,1]$ we define the equivalence relation $(x,s)\sim_{f} (y,t)\Leftrightarrow y=f(x),$ for $s=0$ and $t=1,$ or $(x,s)=(y,t)$ otherwise, which identifies only the boundary points of $M\times [0,1].$ The quotient space of $M\times [0,1]$ by relation $\sim_{f}$ is a closed differentiable manifold of dimension $n+1,$ which we will denote by $\mathcal{W}(M,f).$

The free involution $R\times 1:S^{m}\times \C P^{n}\to S^{m}\times \C P^{n},$ where\linebreak $R:S^{m}\to S^{m}$ is the reflection of the last coordinate given by\linebreak $R(x_{0},\cdots,x_{m})= (x_{0},\cdots,x_{m-1},-x_{m})$ and $1:\C P^{n}\to \C P^{n}$ is the identity, commutes with the involution $T$ given in (\ref{dd1}), so it induces a free involution $S:P(m,n)\to P(m,n).$ 

\begin{defn}
For each pair of non negative integers $m,n,$ the space $\mathcal{W}\left(P(m,n),S\right)$ is a closed differentiable manifold of dimension\linebreak $m+2n+1,$ called Wall manifold and denoted by $Q(m,n).$ Alternatively, $Q(m,n)$ is defined as the mapping torus of the homeomorphism $S:P(m,n)\to P(m,n),$ that is, 
\begin{equation}
Q(m,n)=\dfrac{P(m,n)\times [0,1]}{([x,z],0)\sim(S[x,z],1)}.
\end{equation}
\end{defn}

\begin{prop}[\cite{wall60}, Lemma 4]
The mod 2 cohomology ring of $Q(m,n)$ is polynomially generated by elements $x,c\in H^{1}(Q(m,n))$ and $d\in H^{2}(Q(m,n)),$  truncated by the relations $x^{2}=0,$ $d^{n+1}=0$ and $c^{m+1}=c^{m}\cdot x,$ that is,
\begin{equation}
H^{\ast}(Q(m,n))=\Z_{2}[x,c,d]/\langle x^{2},c^{m+1}+c^{m}x,d^{n+1} \rangle.
\end{equation}
\end{prop}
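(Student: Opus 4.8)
The plan is to exploit the description of $Q(m,n)$ as the mapping torus of $S\colon P(m,n)\to P(m,n)$, which realizes it as the total space of a fibre bundle $P(m,n)\hookrightarrow Q(m,n)\xrightarrow{\pi}S^{1}$ with monodromy $S$. Since $H^{\ast}(S^{1})$ is concentrated in degrees $0$ and $1$, the associated Leray--Serre spectral sequence collapses onto its two columns, and the computation is governed by the Wang exact sequence
\[
\cdots\to H^{q}(Q(m,n))\xrightarrow{i^{\ast}}H^{q}(P(m,n))\xrightarrow{S^{\ast}-\mathbf{1}}H^{q}(P(m,n))\xrightarrow{\theta}H^{q+1}(Q(m,n))\to\cdots .
\]
I take as input Dold's computation $H^{\ast}(P(m,n))=\Z_{2}[c,d]/\langle c^{m+1},d^{n+1}\rangle$ together with the geometric picture of $P(m,n)$ as the $\C P^{n}$-bundle $S^{m}\times_{\Z_{2}}\C P^{n}\to\R P^{m}$.

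First I would show that the monodromy $S^{\ast}$ is the identity on $H^{\ast}(P(m,n))$. Because $H^{1}(P(m,n))=\langle c\rangle$ is one-dimensional and $S$ is a homeomorphism, $S^{\ast}c=c$ is automatic; more precisely $S$ covers the reflection $\bar R$ on $\R P^{m}$, and since $\bar R^{\ast}=\mathbf{1}$ on $H^{\ast}(\R P^{m})$ the class $c$ (pulled back from the base) is fixed in all degrees. For $d$ I would restrict along the sub-Dold manifold $P(m-1,n)=P(m,n)\big|_{\{x_{m}=0\}}$, which $S$ fixes pointwise since $\bar R$ reflects only the last coordinate; the restriction $H^{\ast}(P(m,n))\to H^{\ast}(P(m-1,n))$ sends $d\mapsto d$ and detects the only possible ambiguity $S^{\ast}d=d+\varepsilon c^{2}$, forcing $\varepsilon=0$ (the smallest values of $m$ being checked by hand). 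With $S^{\ast}=\mathbf{1}$, the Wang sequence splits into short exact sequences $0\to H^{q-1}(P(m,n))\xrightarrow{\theta}H^{q}(Q(m,n))\xrightarrow{i^{\ast}}H^{q}(P(m,n))\to 0$, which split over $\Z_{2}$; taking $x=\pi^{\ast}(\text{generator of }H^{1}(S^{1}))$ and lifts $c,d$ of Dold's generators gives the additive isomorphism $H^{\ast}(Q(m,n))\cong H^{\ast}(P(m,n))\otimes\Lambda(x)$, of total dimension $2(m+1)(n+1)$.

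It then remains to pin down the three multiplicative relations. The relation $x^{2}=0$ is immediate, as $x=\pi^{\ast}$ of a class squaring to zero in $H^{\ast}(S^{1})$. For $d^{n+1}=0$ I would argue that the $\C P^{n}$-direction carries no twisting: the monodromy $S$ is induced by $R\times 1$, which is the \emph{identity} on the $\C P^{n}$ factor, so the $\C P^{n}$-bundle $Q(m,n)\to Q(m,0):=\mathcal{W}(\R P^{m},\bar R)$ has the same untwisted structure responsible for Dold's fibrewise relation $d^{n+1}=0$, and after adjusting the lift $d$ by a class from $\rho^{\ast}H^{2}(Q(m,0))$ one arranges $d^{n+1}=0$. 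The essential relation is $c^{m+1}=c^{m}x$. Here I would use that $c=\rho^{\ast}c_{0}$ is pulled back from the base $Q(m,0)$, reducing the claim to the case $n=0$, where $H^{\ast}(Q(m,0))$ has additive basis $\{c^{i},\,c^{i}x:0\le i\le m\}$. Since $c^{m+1}$ restricts to $0$ on the fibre $\R P^{m}$, it lies in the line $H^{m+1}(Q(m,0))=\langle c^{m}x\rangle$, so $c^{m+1}=\lambda\,c^{m}x$ for a single $\lambda\in\Z_{2}$ that depends on the choice of lift of $c$; the task is to show $c$ can be chosen with $\lambda=1$ and that $c^{m}x$ is the nonzero top class.

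The main obstacle is exactly this last multiplicative extension, for it is detected by neither the additive structure nor Poincaré duality (the intersection pairing on the closed manifold $Q(m,0)$ is nondegenerate for \emph{either} value of $\lambda$). To compute $\lambda$ I would pass to Steenrod operations on $Q(m,0)$: from $\mathrm{Sq}^{1}c=c^{2}$ and the Cartan formula one gets $\mathrm{Sq}^{1}(c^{m})=m\,c^{m+1}$, while the Wu formula gives $\mathrm{Sq}^{1}(c^{m})=w_{1}(TQ(m,0))\smile c^{m}$; computing $w_{1}$ of the tangent bundle from the reflection monodromy — its fibrewise part $(m+1)c$ together with the orientation twist of $\bar R$, which contributes $x$ precisely when $m$ is odd — yields $\lambda\equiv m\pmod 2$ for the normalized lift, so that for even $m$ one replaces $c$ by $c+x$ (changing $\lambda$ by $m+1$) and in all cases obtains $c^{m+1}=c^{m}x$, the nonzero top class. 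This is anchored by the base case $Q(1,0)=\mathcal{W}(\R P^{1},\bar R)$, the Klein bottle, with its classical relation $c^{2}=cx$. Pulling the resulting identity back along $\rho$ gives $c^{m+1}=c^{m}x$ in $H^{\ast}(Q(m,n))$, and assembling the three relations with the additive computation yields the presentation $\Z_{2}[x,c,d]/\langle x^{2},\,c^{m+1}+c^{m}x,\,d^{n+1}\rangle$; the dimension count $2(m+1)(n+1)$ confirms there are no further relations.
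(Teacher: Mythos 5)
Your overall architecture is sound, and it is worth noting that the paper itself offers no proof of this proposition (it is quoted from Wall's Lemma 4), so your Wang-sequence reconstruction along the fibration $Q(m,n)\to S^{1}$ is a genuine proof attempt rather than a variant of an argument in the text. The pieces that work, work well: $S^{\ast}c=c$ is automatic, the restriction to the pointwise-fixed $P(m-1,n)$ gives $S^{\ast}d=d$ for $m\geq 3$ (the deferred case $m=2$ follows by including $P(2,n)$ equivariantly in $P(3,n)$; $m\leq 1$ is trivial), and the Wu-formula step really does pin down the multiplicative extension: writing $w_{1}(Q(m,0))=(m+1)c+\epsilon x$ and $c^{m+1}=\lambda c^{m}x$, the identity $m\,c^{m+1}=\mathrm{Sq}^{1}(c^{m})=w_{1}\smile c^{m}$ forces $\lambda=\epsilon$ for every choice of lift; for odd $m$ one has $\epsilon=1$ because $\bar R$ reverses orientation on the orientable $\R P^{m}$, while for even $m$ the substitution $c\mapsto c+x$ changes $\lambda$ by $m+1=1$ (consistently, for even $m$ the map $\bar R$ is induced by an element of $SO(m+1)$, so $Q(m,0)\cong \R P^{m}\times S^{1}$ and the two presentations are abstractly isomorphic). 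The final dimension count $2(m+1)(n+1)$ correctly rules out further relations.

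The one step that fails as written is the normalization $d^{n+1}=0$. You claim that "after adjusting the lift $d$ by a class from $\rho^{\ast}H^{2}(Q(m,0))$ one arranges $d^{n+1}=0$," but adjustment alone cannot achieve this: for $n=1$, $(d+\rho^{\ast}b)^{2}=d^{2}+\rho^{\ast}(b^{2})$, and the squares of $H^{2}(Q(m,0))=\langle c^{2},cx\rangle$ are only $0$ and $c^{4}$, so a hypothetical component $c^{3}x$ of $d^{2}$ could never be removed; for general odd $n$ the linear correction $(n+1)d^{n}\rho^{\ast}b$ vanishes mod $2$ and the same problem recurs. So the vanishing of the Leray--Hirsch coefficients $b_{2i}$ in $d^{n+1}=\sum_{i}\rho^{\ast}(b_{2i})d^{n+1-i}$ is genuine content, not a choice of lift --- exactly the point Dold had to work for in $P(m,n)$ itself. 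Your remark that the monodromy is the identity on the $\C P^{n}$ factor contains the correct fix, but you must actually deploy it: since $R\times 1$ commutes with the defining free involution, $Q(m,n)=\mathcal{W}(S^{m},R)\times_{\Z_{2}}\C P^{n}$, i.e.\ the bundle $\rho$ is associated to the principal $\Z_{2}$-bundle $\mathcal{W}(S^{m},R)\to Q(m,0)$ with $\Z_{2}$ acting on $\C P^{n}$ by conjugation, hence is pulled back from the universal such bundle $P(\infty,n)\to \R P^{\infty}$ along a classifying map. In $H^{\ast}(P(\infty,n))=\Z_{2}[c,d]/\langle d^{n+1}\rangle$ (Dold's theorem in the limit $m\to\infty$) the relation $d^{n+1}=0$ holds exactly, and pulling back that universal class produces a lift $d$ with $d^{n+1}=0$ on the nose. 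With this substitution your argument closes up completely.
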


\begin{defn}\label{def1}
A Wall structure on a manifold $M$ is a homotopy class of maps $\beta:M\to S^{1}$ such that $\beta^{\ast}(\iota)=w_{1}(M)$ the first Stiefel-Whitney class, where $\iota\in H^{1}(S^{1})$ is the generator.
\end{defn}

The projection $P(m,n)\times [0,1]\to [0,1]$ induces a fibration \linebreak$\beta:Q(m,n)\to S^{1}$ such that $\beta^{\ast}(\iota)=x=w_{1}(Q(m,n))$ and this provides a Wall structure on the manifolds $Q(m,n)$ as in the Definition \ref{def1}. In \cite{wall60}, Wall showed through these constructions that 
\begin{equation}
\{[\R P^{2i}],[Q(2^{r}-2,s2^{r})]\text{ };\text{ }i,r,s\geq 1	\}
\end{equation} 
is a generator set of $\mathfrak{R}_{\ast}.$ Before that, in 1956, Dold in \cite{dold56} has shown that 
\begin{equation}
\{[\R P^{2i}],[P(2^{r}-1,s2^{r})]\text{ };\text{ }i,r,s\geq 1	\}
\end{equation} 
is a generator set of the bordism ring, and in 1965 Milnor showed in  \cite{milnor65} that 
\begin{equation}\{[\R P^{2i}],[H(2^{k},2t2^{k})]\text{ };\text{ }i,k,t\geq 1	\}
\end{equation} 
is also a generator set of $\mathfrak{R}_{\ast},$ where $H(m,n)$ is a (m+n-1)-dimensional Milnor manifold. Therefore it is a relevant question to get invariants of these manifolds.

\begin{rem}
If a topological space $X$ admits a free action of $\Z_{2}=\langle g \rangle$ then this induces a free involution $T:X\to X$ defined by\linebreak $T(x)=g\ast x.$ On other hand, the involution $T$ induces a $\Z_{2}-$action on the cohomology ring of $X,$ given by $T^{\ast}:H^{\ast}(X)\to H^{\ast}(X).$ 
\end{rem}

From the bordism theory (\cite{Conner}, Theorem 24.2) we know that if a closed differentiable manifold $X$ does not bounds, so it cannot admits a free involution. However, when $X$ bounds there may or may not exists such action. In \cite{Khare}, S. S. Khare shows that  $Q(m,n)$ bounds, if and only if, $n$ is odd or $n=0$ and $m$ odd. Therefore, for the particular cases where  $n\geq 1$ is an odd integer such manifolds may or may not admits a free $\Z_{2}-$action.

\begin{prop}\label{exist}
For all odd integer $n\geq 1$, the Wall manifold $Q(m,n)$ admits a free involution.
\end{prop}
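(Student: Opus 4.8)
The plan is to exhibit an explicit free involution on $Q(m,n)$ by lifting a free involution of the factor $\C P^{n}$ (which exists precisely because $n$ is odd) up through the two constructions defining $Q(m,n)$: first the Dold quotient $S^{m}\times\C P^{n}\to P(m,n)$, and then the mapping torus of $S$. Since $n$ is odd, write $n+1=2k$ and let $J$ be the real $(n+1)\times(n+1)$ matrix built from $k$ diagonal blocks $\left(\begin{smallmatrix}0&-1\\1&0\end{smallmatrix}\right)$, so that $J^{2}=-\id$ and $\overline{J}=J$. Define
$$\sigma_{0}:\C P^{n}\to\C P^{n},\qquad \sigma_{0}[z]=[J\overline{z}].$$
First I would check that $\sigma_{0}$ is a free involution: applying it twice gives $[J\,\overline{J\overline{z}}\,]=[J^{2}z]=[-z]=[z]$, and a fixed point $[J\overline{z}]=[z]$ forces $J\overline{z}=\lambda z$ for some $\lambda\in\C^{*}$; conjugating and substituting yields $|\lambda|^{2}=-1$, which is impossible. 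This is the standard free involution coming from the quaternionic structure on $\C^{2k}$.

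Next I would transport $\sigma_{0}$ to $S^{m}\times\C P^{n}$ via $\sigma_{1}(x,[z])=(x,\sigma_{0}[z])$, which is again a free involution, and verify the two commutation relations needed to push it down. Using $\overline{J\overline{z}}=Jz$ one computes $T\sigma_{1}(x,[z])=(-x,[Jz])=\sigma_{1}T(x,[z])$, so $\sigma_{1}$ commutes with the Dold involution $T$ of (\ref{dd1}); and since $\sigma_{1}$ leaves the $S^{m}$-coordinate untouched it trivially commutes with $R\times 1$. The first relation shows that $\sigma_{1}$ descends to an involution $\sigma_{2}$ on $P(m,n)=(S^{m}\times\C P^{n})/T$. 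This $\sigma_{2}$ is free: a fixed point would mean $\sigma_{1}(x,[z])$ equals $(x,[z])$ or $T(x,[z])=(-x,[\overline{z}])$; the first contradicts freeness of $\sigma_{0}$, and the second forces $x=-x$, impossible on $S^{m}$.

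Finally, the second commutation relation shows that $\sigma_{2}$ commutes with $S$, so it respects the identification $([x,z],0)\sim(S[x,z],1)$ defining the mapping torus and therefore induces an involution $\sigma_{3}$ on $Q(m,n)$ by $\sigma_{3}([x,z],t)=(\sigma_{2}[x,z],t)$. Because $\sigma_{3}$ preserves the $[0,1]$-coordinate and restricts on each fiber to the free map $\sigma_{2}$, it has no fixed point (the only identification at $t=0$ is with the slice $t=1$, so no new fixed points are created at the gluing), which completes the construction.

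I expect the main obstacle to be bookkeeping rather than depth: the genuinely essential input is the freeness of $\sigma_{0}$ on $\C P^{n}$, which is exactly where the hypothesis that $n$ is odd is used (for $n$ even, $\C P^{n}$ has Euler characteristic $n+1$ odd and hence admits no free involution, since a free $\Z_{2}$-action would give $\chi(\C P^{n})=2\,\chi(\C P^{n}/\Z_{2})$). The remaining work is the careful verification, at each of the two quotient stages, that freeness is preserved — in particular ruling out the "cross" fixed points arising from the two identifications (the $T$-orbit identification and the mapping-torus gluing by $S$) — which is routine once the commutation identities $T\sigma_{1}=\sigma_{1}T$ and $(R\times 1)\sigma_{1}=\sigma_{1}(R\times 1)$ are in hand.
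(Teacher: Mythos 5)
Your proof is correct, and it takes the same basic route as the paper --- lifting the quaternionic free involution $\sigma_{0}[z]=[J\overline{z}]$ of $\C P^{n}$ ($n$ odd) through the Dold quotient and then through the mapping torus. But there is one crucial difference of detail, and it is worth recording: the paper's formula also applies the antipodal map to the sphere coordinate, i.e.\ it uses $\varphi([x,[z]],s)=([-x,[J\overline{z}]],s)$ where you use $([x,[J\overline{z}]],s)$, and the paper's version is in fact \emph{not} free. Since the Dold involution $T$ descends to the identity of $P(m,n)$, one can rewrite $[-x,[J\overline{z}]]=[x,[\overline{J\overline{z}}]]=[x,[Jz]]$ ($J$ is real), so the paper's induced map on $P(m,n)$ is $[x,[z]]\mapsto[x,[Jz]]$, which fixes every class $[x,[z]]$ with $z$ an eigenvector of $J$; such $z$ exist because $J^{2}=-\id$ forces eigenvalues $\pm i$, and $J$ is diagonalizable over $\C$. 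Concretely, for $n=1$ the paper's $\varphi$ sends $[x,[i:1]]$ to $[-x,[-1:-i]]=[-x,[1:i]]$, and applying $T$ gives $[x,[1:-i]]=[x,[i:1]]$, a fixed point; the same example, padded with zeros, works for every odd $n$.

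Your formula avoids exactly this trap: keeping the sphere coordinate fixed, the fixed-point analysis on $P(m,n)$ reduces to the two conditions $[J\overline{z}]=[z]$ (impossible by freeness of $\sigma_{0}$) and $x=-x$ (impossible on $S^{m}$), whereas with the antipodal twist the binding condition becomes $[J\overline{z}]=[\overline{z}]$, which has solutions. Put differently, modulo $T$ the correct involution can be written either as your $(x,[z])\mapsto(x,[J\overline{z}])$ or as $(x,[z])\mapsto(-x,[Jz])$; one must combine the antipodal map with $J$ \emph{or} conjugation with $J$, but not both, since the two extra twists cancel under the Dold identification. The rest of your argument --- the commutations $\sigma_{1}T=T\sigma_{1}$ and $\sigma_{1}(R\times 1)=(R\times 1)\sigma_{1}$, the descent to the mapping torus, and the observation that no fixed points can appear at the gluing because your involution preserves the $[0,1]$-coordinate --- is complete and correct. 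So your proposal proves the proposition as stated, and it does so by repairing, rather than reproducing, the paper's own construction.
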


\begin{proof}
Let $\varphi:P(m,n)\times [0,1]\to P(m,n)\times [0,1]$ the free involution defined by 
$$\varphi \left([x,[z_{0}:\cdots :z_{n}]],s\right)=\left([-x,[-\overline{z}_{1}:\overline{z}_{0}:\cdots : -\overline{z}_{n}:\overline{z}_{n-1}]],s\right).$$

If $a=\left([x,[z]],s\right)\sim_{S} \left([y,[w]],t\right)=b,$ that is, $s=0,$ $t=1$ and 
$$\left[y,[w]\right]=S\left[x,[z]\right]=\left[(x_{0},\cdots,x_{m-1},-x_{m}),[\overline{z}]\right],$$
then $\varphi(a)\sim_{S}\varphi(b).$ Therefore, $\varphi$ induces a free involution $F$ on $Q(m,n).$
\end{proof}

For $X=Q(1,n),$ we have the relations $d^{n+1}=0,$ $c^{2}=c\cdot x$ and $x^{2}=0,$ which provides the following description of cohomology the groups:
$$
\begin{array}{rcclcl}
H^{2k+1}(X) &\cong& \langle c\cdot d^{k}\rangle\oplus\langle x\cdot d^{k}\rangle&\cong& \Z_{2}^{2}; & 0\leq k\leq n ;\\
H^{2k}(X) & \cong & \langle d^{k} \rangle\oplus\langle d^{k-1}\cdot c^{2}\rangle&\cong& \Z_{2}^{2}; & 1\leq k\leq n, 
\end{array}
$$
and $H^{2n+2}(X)\cong \Z_{2}\langle d^{n}\cdot c^{2}\rangle,$ where $2n+2$ is the dimension of $X.$ Therefore, even when $m=1,$ it is not immediate that an action of $\Z_{2}$ on $H^{\ast}(Q(m,n))$ has to be necessarily trivial.

By Proposition \ref{exist}, we can ask about the nature of the orbit space of some free $\Z_{2}-$action in a Wall manifold of the type $Q(m,n),$ for $n$ an odd natural. We will show, in Section \ref{Section4}, that there is an unique  mod 2 cohomology ring $H^{\ast}(Q(m,n)/\Z_{2}),$ when $m=1$, whose structure is given in Theorem \ref{main}.  For the others values of $m,$ we have at least an estimate for the number of possible cohomology ring structures. 

To obtain this results, we will use the spectral sequence associated to the Borel fibration (\ref{bof}) and the equality (\ref{pt}). For that we need to impose some conditions to ensure that the action of $\pi_{1}(B_{\Z_{2}})=\pi_{1}(\R P^{\infty})=\Z_{2}$ in the cohomology of the fiber $Q(m,n)$ is trivial, what we do in the next results. 

\begin{rem}
If there is an action of $\Z_{2}=\langle g\rangle$ on $H^{\ast}(Q(m,n)),$ as the cohomology ring is generated by the elements $x,c$ and $d,$ this action is entirely determined when we know the operations $g\ast x,$ $g\ast c$ and $g\ast d,$ that is, we just  have to analyze the actions 
\begin{equation}
g^{\ast}:H^{j}(Q(1,n))\to H^{j}(Q(1,n)),\,\, \alpha\mapsto g\ast\alpha, \text{ for } j=1,2,
\end{equation}  
and to analyze it we note that by the naturality of cup product,  
\begin{equation}
g\ast (\alpha^{r}\cdot \beta^{s})=(g\ast \alpha)^{r}\cdot (g\ast \beta)^{s},\,\,\,\forall \alpha,\beta\in H^{\ast}(Q(m,n)),
\end{equation}
for all $r,s\geq 0,$ which will be useful in the next calculations. 
\end{rem}

Clearly, any action of $\Z_{2}=\langle g\rangle$ on $H^{\ast}(Q(m,n))$ must be trivial on the generator $x$, otherwise if $g\ast x=c$ or $g\ast x=c+x,$ then $0=g\ast x^{2}=c^{2},$ which is a contradiction for all $m\geq 1.$ So, we only need to analyze $g\ast c$ and $g\ast d,$ for which there are only the possibilities $g\ast c\in\{c,c+x\}$ and $g\ast d\in \{d,cx,c^{2},cx+d,c^{2}+d,cx+c^{2},cx+c^{2}+d\}.$ 

For $n> 1,$ if $g\ast d=cx,$ then $g\ast d^{2}=(cx)^{2}=0,$ which implies that $d^{2}=0,$ but it is a contradiction. When $m=1,$ then $c^{2}=cx,$ and there is only the possibility $g\ast d\in \{d,cx+d\}.$ In this case, we will see that the possibility $g\ast d =cx+ d$ also cannot to be occurs under some dimensional condition and for that we will use the following general theorem about involutions.

\begin{thm}[\cite{bredon}, p. 407]\label{invo}
Let $T$ be an involution on a finitistic space $X.$ Suppose that $H^{i}(X)=0,$ for $i>2l$ and let $T^{\ast}$ be the identity on $H^{2l}(X).$ If there is an element $a\in H^{l}(X)$ such that $a\cdot T^{\ast}(a)\neq 0$ then the fixed point set of $T$ is nonempty.
\end{thm}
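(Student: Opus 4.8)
The plan is to prove the contrapositive: assuming the fixed point set of $T$ is empty, so that $T$ generates a free $\Z_{2}$-action on $X$, I would show that $a\cdot T^{\ast}(a)=0$ for every $a\in H^{l}(X)$. The ambient framework is the Borel fibration $X\hookrightarrow X_{\Z_{2}}\to B_{\Z_{2}}=\R P^{\infty}$ of Section \ref{Section2}, together with the freeness isomorphism $H^{\ast}(X_{\Z_{2}})\cong H^{\ast}(X/\Z_{2})$ and the dimension bound of Theorem \ref{bredon2}. Since $H^{i}(X)=0$ for $i>2l$ and $T$ acts freely, Theorem \ref{bredon2} gives $H^{j}(X/\Z_{2})\cong H^{j}(X_{\Z_{2}})=0$ for all $j>2l$; the vanishing in degree $2l+1$ is what I will eventually contradict.

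First I would record two facts about $b:=a\cdot T^{\ast}(a)\in H^{2l}(X)$. It is $T^{\ast}$-invariant, since $T^{\ast}(b)=T^{\ast}(a)\cdot a=b$, consistent with the hypothesis that $T^{\ast}$ is the identity on $H^{2l}(X)$; the latter makes the local system $\mathcal{H}^{2l}(X)$ on $\R P^{\infty}$ simple, so that $E_{2}^{p,2l}\cong H^{p}(\R P^{\infty};\Z_{2})\otimes H^{2l}(X)$ in the Leray--Serre spectral sequence (\ref{ss}) of $\pi$. Secondly---and this is the key point---I claim that $b$ lifts to an equivariant class, i.e. $b\in\operatorname{im}\big(i^{\ast}:H^{2l}(X_{\Z_{2}})\to H^{2l}(X)\big)$. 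To produce such a lift I would invoke the Steenrod power construction: the twisted diagonal $\delta:X\to X\times X$, $\delta(x)=(x,Tx)$, is $\Z_{2}$-equivariant for the $T$-action on the source and the swap action on the target, and the extended power class $P(a)\in H^{2l}\big(E_{\Z_{2}}\times_{\Z_{2}}(X\times X)\big)$ restricts to $a\times a$ on the fibre. Pulling back along the induced map $\bar\delta:X_{\Z_{2}}\to E_{\Z_{2}}\times_{\Z_{2}}(X\times X)$ yields $\hat b:=\bar\delta^{\ast}P(a)\in H^{2l}(X_{\Z_{2}})$ whose fibre restriction is $\delta^{\ast}(a\times a)=a\cdot T^{\ast}(a)=b$. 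Hence $b\in\operatorname{im}(i^{\ast})=E_{\infty}^{0,2l}$, so $b$ is a permanent cycle.

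With these facts I would run the following spectral sequence argument. The base class $t\in H^{1}(\R P^{\infty};\Z_{2})=E_{2}^{1,0}$ is a permanent cycle, and by the previous step so is $b\in E_{2}^{0,2l}$; since $d_{r}$ is a derivation, $d_{r}(t\otimes b)=d_{r}(t)\cdot b+t\cdot d_{r}(b)=0$ for all $r$, so $t\otimes b$ is a permanent cycle as well. Because the column index $1-r$ of any potential source $E_{r}^{1-r,\,2l+r-1}$ is negative for $r\geq2$, no differential maps into $E_{r}^{1,2l}$, so $t\otimes b$ is not a boundary. Assuming $b\neq0$, the element $t\otimes b$ is nonzero in $E_{2}^{1,2l}$ and therefore survives to a nonzero element of $E_{\infty}^{1,2l}$. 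But $E_{\infty}^{1,2l}$ is a subquotient of $H^{2l+1}(X_{\Z_{2}})\cong H^{2l+1}(X/\Z_{2})$, which vanishes by Theorem \ref{bredon2}. This contradiction forces $b=a\cdot T^{\ast}(a)=0$, completing the contrapositive.

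The hard part, and the step I would spend the most care on, is the equivariant lift of $b$: one must check that the power class $P(a)$ exists and restricts correctly for an arbitrary finitistic space rather than a CW complex, and that $\bar\delta$ behaves well on the homotopy quotients in $\check{C}$ech cohomology. A secondary point is to confirm that the vanishing statement of Theorem \ref{bredon2} is available in the finitistic setting of the present hypotheses, so as to secure $H^{2l+1}(X/\Z_{2})=0$; everything else---the invariance of $b$, the simplicity of the coefficient system on the top row, and the derivation and edge-homomorphism bookkeeping---is routine once the lift is in hand.
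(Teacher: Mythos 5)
The paper does not prove this statement at all: it is imported verbatim from Bredon's book (the citation on p.~407) and used as a black box, so there is no internal proof to compare yours against. Judged on its own terms, your argument is correct and is, in substance, the classical proof of this result (and the same route as the cited source, which also uses the equivariant map $x\mapsto(x,Tx)$ into the Borel construction of the swap action together with Steenrod's power construction). The bookkeeping is all sound: the contrapositive reduction via Theorem \ref{bredon2} and the isomorphism (\ref{is}), the identification $\mathrm{im}(i^{\ast})=E_{\infty}^{0,2l}$ coming from (\ref{hom2}) to conclude that $b=a\cdot T^{\ast}(a)$ is a permanent cycle, the Leibniz argument making $t\otimes b$ a permanent cycle, and the observation that no differential can map into column $1$, so a nonzero $t\otimes b$ forces $E_{\infty}^{1,2l}\neq 0$ and hence $H^{2l+1}(X/\Z_{2})\neq 0$, contradicting Theorem \ref{bredon2}. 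You also locate correctly where each hypothesis enters: triviality of $T^{\ast}$ on $H^{2l}(X)$ is what makes the row $q=2l$ of $E_{2}$ untwisted, so that $t\otimes b\neq 0$ whenever $b\neq 0$. The one genuinely delicate step is the one you flag yourself, the equivariant lift of $b$ via the power class $P(a)$ and the twisted diagonal; in the finitistic/\v{C}ech setting this is handled by representability of \v{C}ech cohomology on paracompact Hausdorff spaces ($\check{H}^{l}(X;\Z_{2})\cong[X,K(\Z_{2},l)]$), which lets you pull back the universal power class of $K(\Z_{2},l)$, after which the fibre restriction $\bar\delta^{\ast}P(a)\mapsto \delta^{\ast}(a\times a)=a\cdot T^{\ast}(a)$ is exactly as you state. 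So the proposal has no gap; it supplies precisely the argument the paper delegates to the literature.
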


\begin{prop}\label{acaotrivial}
Let $T$ be a free involution in $Q(1,n)$ and let $T^{\ast}$ be the induced homomorphism on the cohomology ring $H^{\ast}(Q(1,n)).$ If $n\equiv 1(\text{mod }4),$ then $T^{\ast}(d)=d.$
\end{prop}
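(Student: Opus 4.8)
The plan is to argue by contradiction using Theorem \ref{invo}, exploiting the freeness of $T$ to forbid a fixed point. From the discussion preceding the statement, when $m=1$ the only possibilities for the action on the generator $d$ are $T^*(d)=d$ or $T^*(d)=cx+d$, so it suffices to rule out the second. I would therefore assume $T^*(d)=cx+d$ and exhibit a class $a$ with $a\cdot T^*(a)\neq 0$; Theorem \ref{invo} would then give a nonempty fixed point set, contradicting freeness.

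To invoke Theorem \ref{invo} I first fix the numerology. The manifold $X=Q(1,n)$ has dimension $2n+2$, so I take $l=n+1$ and $2l=2n+2$. The top group $H^{2n+2}(X)\cong\Z_2\langle c^2d^n\rangle$ is one-dimensional over $\Z_2$, and since $T^*$ is a ring isomorphism it must restrict to the identity on $H^{2l}(X)$; thus the hypothesis ``$T^*=\mathrm{id}$ on $H^{2l}(X)$'' holds automatically, with no extra input.

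The core of the argument is the choice of $a\in H^{l}(X)=H^{n+1}(X)$ and the computation of $a\cdot T^*(a)$. Because $n\equiv 1\pmod 4$, the integer $p=(n+1)/2$ is \emph{odd}, and I would take $a=d^{p}$, which sits in degree $2p=n+1$. Expanding $T^*(a)=(cx+d)^{p}$ mod $2$ and using $(cx)^2=c^2x^2=0$ kills every binomial term of order $\geq 2$, leaving $d^{p}+p\,cx\,d^{p-1}$; since $p$ is odd this is $d^{p}+cx\,d^{p-1}$. Multiplying by $a=d^{p}$ gives $d^{2p}+cx\,d^{2p-1}=d^{n+1}+cx\,d^{n}$, where the first term vanishes by $d^{n+1}=0$. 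Using the relation $c^2=cx$ valid for $m=1$, the surviving term is $cx\,d^{n}=c^2d^{n}$, the nonzero generator of $H^{2n+2}(X)$.

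Hence $a\cdot T^*(a)=c^2d^{n}\neq 0$, so Theorem \ref{invo} produces a fixed point of $T$, contradicting freeness; this forces $T^*(d)=d$. The only delicate point, and the place where the congruence is genuinely used, is ensuring that the cross term $cx\,d^{p-1}$ survives, which requires $p=(n+1)/2$ to be odd -- precisely the condition $n\equiv 1\pmod 4$. For $n\equiv 3\pmod 4$ that term would be even and vanish, so this method would yield no information, consistent with the stated scope of the proposition.
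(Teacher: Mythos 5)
Your proposal is correct and follows essentially the same route as the paper: both reduce to ruling out $T^{\ast}(d)=cx+d$, take $a=d^{(n+1)/2}\in H^{n+1}(Q(1,n))$, use $(cx)^{2}=0$ and the oddness of $(n+1)/2$ (i.e. $n\equiv 1 \pmod 4$) to get $a\cdot T^{\ast}(a)=cx\,d^{n}\neq 0$, and then invoke Theorem \ref{invo} with $l=n+1$ to contradict freeness. Your added remark that the hypothesis ``$T^{\ast}=\mathrm{id}$ on $H^{2l}$'' holds automatically because $H^{2n+2}\cong\Z_{2}$ is a small but welcome point the paper leaves implicit.
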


\begin{proof}
Let us consider $\Z_{2}=\langle g\rangle$ and let us suppose that $T^{\ast} (d)=g\ast d= cx+d.$ For $s=(n+1)/2,$ let us take $d^{s}\in H^{n+1}(Q(1,n)).$ As $n\equiv 1(\text{mod }4),$ then $s= \binom{s}{s-1}$ is odd, therefore
$$d^{s}T^{\ast}(d^{s})=d^{s}(cx+d)^{s}= d^{s}(cxd^{s-1}+d^{s})=d^{n}cx\neq 0,$$
since $\langle d^{n}cx\rangle = H^{2n+2}(Q(1,n)).$ Taking $a=d^{s}$ and $l=n+1,$ it follows from Theorem \ref{invo} that the fixed point set of $T$ is nonempty, what is not true. 
\end{proof}

\begin{cor}\label{acaotrivial2}
Let $T$ be a free involution on $Q(1,n)$ and let $T^{\ast}$ be the induced $\Z_{2}-$action on $H^{\ast}(Q(1,n)).$ If $T^{\ast}(c)=c$ and $n\equiv 1(\text{mod }4),$ then $T^{\ast}$ is trivial.
\end{cor}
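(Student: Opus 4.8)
The plan is to observe that $T^{\ast}$, being a ring homomorphism, is completely determined by its values on the three algebra generators $x$, $c$, $d$ of $H^{\ast}(Q(1,n))$, so it suffices to verify that $T^{\ast}$ fixes each of them.

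First I would recall, from the discussion preceding Theorem \ref{invo}, that any $\Z_{2}$-action on $H^{\ast}(Q(m,n))$ necessarily satisfies $T^{\ast}(x)=x$: the alternatives $g\ast x=c$ or $g\ast x=c+x$ would force $0=g\ast x^{2}=c^{2}\neq 0$, a contradiction. Thus $T^{\ast}(x)=x$ holds automatically, while the hypothesis of the corollary supplies $T^{\ast}(c)=c$ directly.

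It then remains only to pin down $T^{\ast}(d)$. Here I would invoke Proposition \ref{acaotrivial}: since $T$ is a free involution on $Q(1,n)$ and $n\equiv 1\,(\mathrm{mod}\,4)$, that proposition yields $T^{\ast}(d)=d$. This is the sole nontrivial ingredient, and its proof has already been carried out via Theorem \ref{invo}, so the genuine difficulty of the corollary is front-loaded into Proposition \ref{acaotrivial}.

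With $T^{\ast}(x)=x$, $T^{\ast}(c)=c$, and $T^{\ast}(d)=d$ in hand, $T^{\ast}$ agrees with the identity on a set of algebra generators of $H^{\ast}(Q(1,n))$; since both maps are ring endomorphisms, they coincide on the whole ring, and hence $T^{\ast}$ is trivial. I do not expect any real obstacle at this stage: the corollary is essentially a bookkeeping assembly of the generator-by-generator analysis, with all the substantive work residing in Proposition \ref{acaotrivial}.
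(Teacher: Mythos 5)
Your proposal is correct and follows exactly the route the paper intends: the paper leaves the corollary without an explicit proof precisely because it is the assembly you describe, combining the observation preceding Theorem \ref{invo} that any $\Z_{2}$-action fixes $x$, the hypothesis $T^{\ast}(c)=c$, and Proposition \ref{acaotrivial} giving $T^{\ast}(d)=d$. Since $x,c,d$ generate $H^{\ast}(Q(1,n))$ as a ring, $T^{\ast}$ is the identity, as you conclude.
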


We observe that even if $c\neq T^{\ast}(c)=c+x,$ we have $T^{\ast}(\alpha)=\alpha,$ for all $\alpha\in H^{i}(Q(1,n)),$ $i>1,$ that is, the action is trivial, except for  the elements $c$ and $c+x$ in the cohomology ring. 

When $m$ is even, then the induced action has to be trivial on the generators $x$ and $c$ of $H^{\ast}(Q(m,n))$, otherwise if $g\ast c=c+x,$ then 
\begin{equation}
g\ast c^{m+1}=(c+x)^{m+1}=c^{m+1}+c^{m}x=0,
\end{equation} 
but we know that $c^{m+1}=c^{m}x\neq 0.$ It still implies that $g\ast d\neq c^{2}$ and $g\ast d\neq c^{2}+cx.$ In fact, as the action induces an automorphism $\varphi$ on the cohomology groups, if $g\ast d=c^{2},$ then $d+c^{2}\in \ker\varphi=\{0\}$ and if $g\ast d = c^{2}+cx,$ then $d+c^{2}+cx\in \ker\varphi=\{0\}$. So, in this case there is only the possibility 
\begin{equation}
T^{\ast}(d)=g\ast d\in\{d,c^{2}+d,cx+d,c^{2}+cx+d\}.
\end{equation}

%%%%%%%%%%%%%%%%%%%%%%%%%%%%%%%%%%%%%%%%%%%%%%%%%%%%%%%%%%%%%%%%%%%%%%%%%%%%%%%%%%%%%%%%%%%%%%%%%%%%%%%%%%%%

\section{Uniqueness of cohomology ring}\label{Section4}

%%%%%%%%%%%%%%%%%%%%%%%%%%%%%%%%%%%%%%%%%%%%%%%%%%%%%%%%%%%%%%%%%%%%%%%%%%%%%%%%%%%%%%%%%%%%%%%%%%%%%%%%%%%%
 
The main purpose of this section is to show that there is only one possible homotopy class of the orbit space of free involutions on $Q(1,n),$ under some hypotheses on $n.$ For this, let us consider the $G-$space $Q(1,n),$ where $G$ is the group $\Z_{2},$ and 
\begin{equation}\label{bof}
\begin{tikzcd}
Q(1,n) \arrow[hookrightarrow]{r} & Q(1,n)_{G}\arrow{r}{\pi}& B_{G}
\end{tikzcd}
\end{equation}  
is the associated Borel fibration.

Under the hypothesis of Corollary \ref{acaotrivial2}, the group $\pi_{1}(B_{G})=\Z_{2}$ acts trivially on the cohomology of the fiber $Q(1,n),$ so there is a first qua\-drant cohomology spectral sequence $\{E_{r}^{\ast,\ast},d_{r}\},$ where
\begin{equation}\label{seq1}
E_{2}^{p,q}\cong H^{p}(B_{G})\otimes_{G}H^{q}(Q(1,n)),
\end{equation}
which converges as an algebra to $H^{\ast}(Q(1,n)_{G})\cong H^{\ast} \left(Q(1,n)/G\right).$

We will use the spectral sequence whose $E_{2}-$term is given in (\ref{seq1}) to get possibles structures of the cohomology ring $H^{\ast}(X/G),$ through the isomorphism (\ref{is}), where $X=Q(1,n),$ for an appropriate $n.$ 

As the action is free it follows from Theorem \ref{bredon2} that this sequence does not collapse in the $E_{2}-$term, otherwise there would be infinite nonzero elements in the orbit space cohomology. So, there are $r_{i}\geq 2$, for which some differentials $d_{r_{i}}^{p,q}:E_{r_{i}}^{p,q}\to E_{r_{i}}^{p+r_{i},q+1-r_{i}}$ are non-trivial. If $r\geq 2$ is the smaller integer among the $r_{i}'$s, then $E_{2}=\cdots =E_{r},$ because $d_{s}=0,$ for any $2\leq s\leq r-1.$ Therefore, 
\begin{equation}
E_{s+1}^{p,q}=\ker(d_{s}^{p,q})/\text{im}(d_{s}^{p-s,q+s-1})=E_{s}^{p,q}/\{0\}=E_{s}^{p,q},
\end{equation} 
for all $p,q\geq 0.$ 

In order to analyze the possible cases when $d_{r}$ is nontrivial, as $x,c$ and $d$ are the generators of $H^{\ast}(X),$ it is enough to look at the actions under such elements, that is, we will look at the possible values to $d_{r}^{0,1}(1\otimes x),$ $d_{r}^{0,1}(1\otimes c)$ and $d_{r}^{0,2}(1\otimes d).$  

Knowing that the sequence is a first quadrant spectral sequence, moreover $\text{im}(d_{r}^{0,1})\subseteq E_{r}^{r,2-r}$ and $\text{im}(d_{r}^{0,2})\subseteq E_{r}^{r,3-r},$ then these differentials can be non trivial only when $r=2$ or $r=3.$ However, at $E_{3}-$page it is immediate that $d_{3}^{0,1}(1\otimes x)=d_{3}^{0,1}(1\otimes c)=0,$ 
which implies that only $d_{3}^{0,2}(1\otimes d)$ can be nonzero. So, at first, we should analyze the following cases:

\begin{itemize}
\item[(A)] $d_{3}^{0,1}(1\otimes x)=0,$ $d_{3}^{0,1}(1\otimes c)=0$ and $d_{3}^{0,2}(1\otimes d)\neq 0;$
\item[(B)] $d_{2}^{0,1}(1\otimes x)=0,$ $d_{2}^{0,1}(1\otimes c)= 0$ and $d_{2}^{0,2}(1\otimes d)\neq 0;$
\item[(C)] $d_{2}^{0,1}(1\otimes x)\neq 0,$ $d_{2}^{0,1}(1\otimes c)\neq 0$ and $d_{2}^{0,2}(1\otimes d)= 0;$
\item[(D)] $d_{2}^{0,1}(1\otimes x)\neq 0,$ $d_{2}^{0,1}(1\otimes c)\neq 0$ and $d_{2}^{0,2}(1\otimes d)\neq 0;$
\item[(E)] $d_{2}^{0,1}(1\otimes x)\neq 0,$ $d_{2}^{0,1}(1\otimes c)=0$ and $d_{2}^{0,2}(1\otimes d)= 0;$ 
\item[(F)] $d_{2}^{0,1}(1\otimes x)\neq 0,$ $d_{2}^{0,1}(1\otimes c)=0$ and $d_{2}^{0,2}(1\otimes d)\neq 0;$
\item[(G)] $d_{2}^{0,1}(1\otimes x)= 0,$ $d_{2}^{0,1}(1\otimes c)\neq 0$ and $d_{2}^{0,2}(1\otimes d)\neq 0;$
\item[(H)] $d_{2}^{0,1}(1\otimes x)=0,$ $d_{2}^{0,1}(1\otimes c)\neq 0$ and $d_{2}^{0,2}(1\otimes d)= 0.$
\end{itemize}

We will show that the only possible case is $A$, which means the uniqueness of the cohomology ring. For the cases $C$ to $H$, the obs\-tructions are simple to check and we will do it at Proposition \ref{ppp}, using only the relations on the cohomology ring and the multiplicative proprieties of the differentials. To show that $B$ is not possible, more calculations are needed, which will be done at Proposition \ref{pppp}.

\begin{lemma}\label{lll}
If $k$ is even, then $d_{i}^{l,k}(t^{l}\otimes c^{k})=0,$ $d_{i}^{l,2k}(t^{l}\otimes d^{k})=0$ and $d_{i}^{l,2k+1}(t^{l}\otimes \nu d^{k})=0,$ for $\nu$ equal to $c$ or $x,$ for all $i\geq 2.$ 
\end{lemma}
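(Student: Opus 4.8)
The plan is to prove all three vanishing statements by a common mechanism: when $k$ is even, the relevant fiber class is a square (or a product involving a square), so the multiplicative (Leibniz) property of the differentials $d_i$, together with the fact that we are working over $\Z_2$, forces the differential to vanish. Recall that each $d_i$ is a derivation, and in characteristic $2$ the Leibniz rule applied to a square $u^2$ gives $d_i(u^2) = 2\, u\, d_i(u) = 0$ automatically.

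First I would handle the middle case $d_i^{l,2k}(t^l \otimes d^k) = 0$, which is the cleanest. Write $t^l \otimes d^k = (t^l \otimes d^{k/2})^2$ when $k$ is even (more precisely, $t^l \otimes d^k = (1\otimes d^{k/2})^2 \smile (t^l \otimes 1)$, using that $t^l\otimes 1$ is a base class pulled back from $H^*(B_G)$ and hence is a permanent cycle). Since $d_i$ is a derivation and $d_i\bigl((1\otimes d^{k/2})^2\bigr) = 0$ in $\Z_2$, and $d_i(t^l\otimes 1)=0$ because base classes survive, the Leibniz rule yields $d_i^{l,2k}(t^l\otimes d^k)=0$. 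The same idea disposes of $d_i^{l,k}(t^l\otimes c^k)=0$: with $k$ even, $t^l\otimes c^k = (t^l\otimes 1)\smile(1\otimes c^{k/2})^2$, and again the square has trivial differential over $\Z_2$ while the base factor is a permanent cycle.

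For the third statement, $d_i^{l,2k+1}(t^l\otimes \nu d^k)=0$ with $\nu\in\{c,x\}$, I would factor $t^l\otimes \nu d^k = (t^l\otimes \nu)\smile(1\otimes d^{k/2})^2$, so that the even power $d^k=(d^{k/2})^2$ again contributes a vanishing differential. By Leibniz, $d_i(t^l\otimes\nu d^k) = d_i(t^l\otimes\nu)\smile(1\otimes d^{k/2})^2 + (t^l\otimes\nu)\smile d_i\bigl((1\otimes d^{k/2})^2\bigr)$, and the second term vanishes in $\Z_2$; the conclusion then reduces to knowing that the factor involving $\nu$ behaves correctly. The essential observation making all three cases work uniformly is that an even exponent exhibits the fiber monomial as containing a square, and squares are killed by every derivation over $\Z_2$.

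The main obstacle I anticipate is bookkeeping rather than conceptual: one must be careful to track the case $k=0$ (where $d^0=1$ and the statements degenerate to already-known facts about base classes and the generators $x,c$), and to justify that the base factor $t^l\otimes 1$ is indeed a permanent cycle so that it may be pulled out of the differential by Leibniz. I would state these supporting facts — that $d_i(t^l\otimes 1)=0$ by naturality of the spectral sequence of the Borel fibration, and that $d_i(u^2)=0$ over $\Z_2$ by the derivation property — as the two lemmas driving the argument, and then verify each of the three displayed identities by the factorization above. The argument is short precisely because it offloads everything onto the $\Z_2$-linearity of the Leibniz rule; no computation of specific differential values is required.
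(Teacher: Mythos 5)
Your mechanism is exactly the paper's: factor the class as (base class)$\smile$(fiber class), use that the bottom row consists of permanent cocycles, and kill the fiber factor by the derivation property applied to a square over $\Z_{2}$. For the first two identities ($t^{l}\otimes c^{k}$ and $t^{l}\otimes d^{k}$, $k$ even) your argument coincides with the computation the paper writes out explicitly for $c^{k}$ (the paper then dismisses the remaining cases as ``similar''), so those parts are fine.

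The third identity is where you have a genuine gap, which you flag but do not close. After factoring $t^{l}\otimes\nu d^{k}=(t^{l}\otimes\nu)\smile(1\otimes d^{k/2})^{2}$, Leibniz leaves the term $d_{i}(t^{l}\otimes\nu)\smile(1\otimes d^{k/2})^{2}$, and you say only that the conclusion ``reduces to knowing that the factor involving $\nu$ behaves correctly.'' That is not a formality, because this term does not vanish for free. For $i\geq 3$ it vanishes for positional reasons: $d_{i}(1\otimes\nu)\in E_{i}^{i,2-i}=0$ since $2-i<0$ in a first-quadrant spectral sequence. But for $i=2$ one has $d_{2}(t^{l}\otimes\nu)=(t^{l}\otimes 1)\smile d_{2}^{0,1}(1\otimes\nu)$, and whether $d_{2}^{0,1}(1\otimes\nu)$ vanishes is precisely what separates the paper's cases (A),(B) from (C)--(H): if $d_{2}^{0,1}(1\otimes\nu)=t^{2}\otimes 1$, then $d_{2}(t^{l}\otimes\nu d^{k})=t^{l+2}\otimes d^{k}\neq 0$, and the third identity is simply false. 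So a complete proof must either restrict that identity to $i\geq 3$ or add the standing hypothesis $d_{2}^{0,1}(1\otimes x)=d_{2}^{0,1}(1\otimes c)=0$; these are exactly the situations (case B, and the $E_{3}$-page) in which the paper invokes it. Your remark that the case $k=0$ ``degenerates to already-known facts'' has the same defect: for $k=0$ the third identity \emph{is} the claim $d_{i}(t^{l}\otimes\nu)=0$, which is not known in advance but is the case hypothesis itself. In fairness, the paper's ``the other cases are similar'' hides the identical point, so your proposal is faithful to the paper's proof---it just exposes an omission the paper leaves implicit. A smaller caveat, shared by you and the paper alike: on pages $i>2$ the square trick presupposes that the square root $1\otimes d^{k/2}$ itself survives to $E_{i}$, which can fail (e.g.\ in case B with $k/2$ odd, since $d_{2}(1\otimes d)\neq 0$); there the vanishing has to be extracted from the computed shape of $E_{3}$ rather than from Leibniz on a square.
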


\begin{proof}
If $k=2j,$ note that $d_{i}^{0,k}(1\otimes c^{k})=2 (1\otimes c^{j})d_{i}^{0,j}(1\otimes c^{j})=0,$ then 
$$d_{i}^{l,k}(t^{l}\otimes c^{k})=d_{i}^{l,k}((t^{l}\otimes 1)(1\otimes c^{k}))=2(t^{l}\otimes 1)(1\otimes c^{j})d_{i}^{0,j}(1\otimes c^{j})=0.$$ 

The other cases are similar.
\end{proof}

\begin{prop}\label{ppp}
The cases $C$ to $H$ cannot occurs.
\end{prop}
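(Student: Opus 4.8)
The plan is to exploit the fact that $d_2$ is a derivation, combined with the relation $c^2 = cx$ that holds in $H^*(Q(1,n))$ precisely because $m=1$. This single computation eliminates all six cases at once: each of $C$ through $H$ posits that \emph{at least one} of $d_2^{0,1}(1\otimes x)$ and $d_2^{0,1}(1\otimes c)$ is nonzero, and I will show that both are forced to vanish.

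First I would pin down the groups involved. By (\ref{seq1}) we have $E_2^{2,0} = H^2(B_G)\otimes H^0(Q(1,n)) = \langle t^2\otimes 1\rangle \cong \Z_2$, so I may write $d_2^{0,1}(1\otimes x) = \epsilon_x\,(t^2\otimes 1)$ and $d_2^{0,1}(1\otimes c) = \epsilon_c\,(t^2\otimes 1)$ for some $\epsilon_x,\epsilon_c\in\{0,1\}$. Likewise $E_2^{2,1} = \langle t^2\otimes x,\ t^2\otimes c\rangle$, in which $x$ and $c$ are linearly independent, being distinct generators of $H^1(Q(1,n))$.

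Next I would apply $d_2$ to the two sides of the equality $1\otimes c^2 = 1\otimes cx$ in $E_2^{0,2}$. Writing $1\otimes c^2 = (1\otimes c)^2$ and using the Leibniz rule over $\Z_2$, the left-hand side gives
\begin{equation*}
d_2\bigl((1\otimes c)^2\bigr) = 2\,(1\otimes c)\,d_2(1\otimes c) = 0,
\end{equation*}
while the right-hand side gives
\begin{equation*}
d_2(1\otimes cx) = d_2(1\otimes c)\,(1\otimes x) + (1\otimes c)\,d_2(1\otimes x) = t^2\otimes(\epsilon_c\,x + \epsilon_x\,c),
\end{equation*}
where both summands lie in $E_2^{2,1}$. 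Comparing the two expressions forces $t^2\otimes(\epsilon_c x + \epsilon_x c) = 0$, and the independence of $x$ and $c$ yields $\epsilon_x = \epsilon_c = 0$; that is, $d_2^{0,1}(1\otimes x) = d_2^{0,1}(1\otimes c) = 0$.

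Finally I would observe that this contradicts the defining hypothesis of each of the cases $C,D,E,F,G,H$, since in every one of them $d_2^{0,1}$ is nonzero on $x$ or on $c$; hence none of them can occur. The only point requiring care is the bidegree bookkeeping that places both Leibniz terms in the common group $E_2^{2,1}$, where $x$ and $c$ are still linearly independent — this is exactly where the hypothesis $m=1$ enters, through $c^2 = cx$, and it is also the reason the value of $d_2^{0,2}(1\otimes d)$ never intervenes, so that all six cases fall irrespective of the behaviour of the degree-two generator.
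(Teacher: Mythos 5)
Your proof is correct and follows essentially the same route as the paper: both arguments hinge on the relation $c^2 = cx$ (valid precisely because $m=1$), the Leibniz rule for $d_2$, and the mod-2 vanishing of the differential on the square $(1\otimes c)^2$, leading to the contradiction $t^2\otimes(\epsilon_c x + \epsilon_x c)=0$. The only difference is presentational — you handle all six cases uniformly via the coefficients $\epsilon_x,\epsilon_c$, whereas the paper treats $E,F$ explicitly and dismisses $C,D,G,H$ as ``similar'' — which is a tidy consolidation but not a different method.
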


\begin{proof}
For an obstruction to the cases $E$ and $F,$ let us observe that it is impossible to occur simultaneously $d_{2}^{0,1}(1\otimes x)\neq 0$ and $d_{2}^{0,1}(1\otimes c)=0,$ since in this case (using Lemma \ref{lll} in its simplest form to $k=2$) we have $t^{2}\otimes c=d_{2}^{0,2}(1\otimes cx)=d_{2}^{0,2}(1\otimes c^{2})=0.$ Similarly, we find obstructions for the cases $C,D,G$ and $H.$ 
\end{proof}

\begin{rem}
For $D$ we still have another obstruction. Suppose that $d_{2}^{0,1}(1\otimes x)=d_{2}^{0,1}(1\otimes c)=t^{2}\otimes 1\neq 0.$ If 
$$d_{2}^{0,2}(1\otimes d)\in E_{2}^{2,1}\cong H^{2}(B_{\Z_{2}})\otimes H^{1}(X)$$ 
is nonzero, then it can only be equal to $t^{2}\otimes(x+c).$ 

In fact, as $d_{2}^{2,1}(t^{2}\otimes x)=d_{2}^{2,1}(t^{2}\otimes c)=t^{4}\otimes 1,$ then this is the only way to 
$$\text{im}(d_{2}^{0,2})\subseteq\ker(d_{2}^{2,1})=\langle t^{2}\otimes (x+c) \rangle.$$ 

Now, Let us observe that $d_{2}^{0,2}(1\otimes cx)=t^{2}\otimes (x+c),$
%$$\begin{array}{rcl}
%d_{2}^{0,2}(1\otimes cx)&=& d_{2}^{0,2}((1\otimes x)(1\otimes c))\\
%                        &=& d_{2}^{0,1}(1\otimes x)(1\otimes c)+(1\otimes x)d_{2}^{0,1}(1\otimes c)\\
%												&=&t^{2}\otimes (x+c),
%\end{array}$$
that is,\linebreak 	$d_{2}^{0,2}(1\otimes d)=d_{2}^{0,2}(1\otimes cx).$ On the other hand, 
$$t^{2}\otimes(x+c)=d_{2}^{0,2}(1\otimes d)=d_{2}^{0,2}(1\otimes cx)=d_{2}^{0,2}(1\otimes c^{2})=0,$$ 
which is a contradiction.
\end{rem}

\begin{prop}\label{pppp}
The case $B$ cannot occurs.
\end{prop}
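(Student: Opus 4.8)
The plan is to assume, toward a contradiction, that case $B$ holds, so that $d_2^{0,1}(1\otimes x)=d_2^{0,1}(1\otimes c)=0$ while $d_2^{0,2}(1\otimes d)=t^2\otimes w$ is nonzero, where necessarily $w\in\{x,c,x+c\}\subset H^1(Q(1,n))$. Since the $E_2$-algebra of (\ref{seq1}) is $\Z_2[t]\otimes H^\ast(Q(1,n))$ with $H^\ast(Q(1,n))=\Z_2[x,c,d]/\langle x^2,c^2+cx,d^{n+1}\rangle$, the differential $d_2$ is the unique $\Z_2[t]$-linear derivation taking these values, and the first task is to compute $E_3=H(E_2,d_2)$ completely as a bigraded algebra. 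The decisive structural input is that $n$ is odd: from $d_2(1\otimes d^k)=k\,(t^2\otimes w)(1\otimes d^{k-1})$ the $d$-tower splits, under $d_2$, into two-term pieces $1\otimes d^{2j+1}\mapsto t^2\otimes w d^{2j}$, so that the homology is governed by the kernel and cokernel of multiplication by $w$ on $R:=\Z_2[x,c]/\langle x^2,c^2+cx\rangle$.

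Carrying this out, I expect to isolate three kinds of classes on $E_3$. First, the permanent cocycles $1\otimes d^{2j}$ and $1\otimes\nu d^{2j}$ with $\nu\in\{c,x\}$, supplied by Lemma \ref{lll}. Second, the relation $t^2\otimes w=0$ (hence $t^2\otimes wR=0$), produced as a $d_2$-boundary. Third, and crucially, the ``anti-diagonal'' survivors $[\,u\,d^{2j+1}]$ and $[\,cx\,d^{2j+1}]$, where $u$ spans $\ker(w\colon R\to R)$ and $cx=c^2$. These survive $d_2$ precisely because of the relation $c^2=cx$; for instance when $w=c$ one has $u=x+c$ and $d_2((x+c)d)=t^2((x+c)c)=t^2(cx+c^2)=0$, even though neither $xd$ nor $cd$ is a $d_2$-cocycle. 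In particular $1\otimes d$ has disappeared, so each anti-diagonal class is \emph{indecomposable} on $E_3$.

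With $E_3$ in hand I would invoke Theorem \ref{bredon2}: because $H^j(X/G)=0$ for $j>2n+2$, the limit $E_\infty$ is finite dimensional, so every infinite $\Z_2[t]$-tower appearing on $E_3$ — in particular the edge tower $\{t^p\otimes 1\}$ and the top tower $\{t^p\otimes cx\,d^{n}\}$ — must be annihilated by the higher differentials $d_3,d_4,\dots$. I would analyse these on the generators of $E_3$, using multiplicativity together with the identities forced by $c^2=cx$, such as $x\cdot[\,u\,d^{2j+1}]=[\,cx\,d^{2j+1}]$ and $c\cdot[\,u\,d^{2j+1}]=0$ for $w=c$; this is the same ``compute a product two ways'' device already used in the Remark that eliminates case $D$. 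The aim is to show that no assignment of the higher differentials can at once annihilate all the free towers, satisfy $d_r^2=0$, and respect the relations carried over from $H^\ast(Q(1,n))$, so that either $E_\infty$ remains infinite dimensional, contradicting Theorem \ref{bredon2}, or one of the relations above is violated. The three values of $w$ would be treated in parallel, each reduced to the behaviour of $d_r$ on the low classes $[\,u\,d]$, $[\,cx\,d]$ and $1\otimes d^2$.

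The main obstacle is exactly this last step. Since the anti-diagonal classes are indecomposable on $E_3$, their higher differentials are not dictated by the Leibniz rule, and a bare dimension count is inconclusive: the free towers can in principle be matched in pairs by transgressions that land on powers of $t$ (note that no indecomposable of $H^\ast(Q(1,n))$ other than $d$ can transgress onto a power of $t$, and $d$ is not transgressive once $d_2(1\otimes d)\neq0$). Thus the contradiction has to be wrung out of the interplay between these would-be transgressions and the multiplicative relations — effectively pinning down $d_3$ and $d_4$ on $[\,u\,d]$, $[\,cx\,d]$ and $1\otimes d^2$ from $c^2=cx$ and $d_r^2=0$, and then propagating the outcome up the $d$-tower. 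Organising this bookkeeping so that it closes uniformly for all three values of $w$ is the hard part of the proof, and I anticipate it is where the bulk of the ``more calculations'' promised before Proposition \ref{pppp} are spent.
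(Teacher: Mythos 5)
Your reduction of the $E_3$-term is the same as the paper's (split the $d$-tower into the pairs $(d^{2j},d^{2j+1})$ using that $n$ is odd, and read off kernel and cokernel of multiplication by $w$ on $R$), and your intended contradiction --- infinite $\Z_{2}[t]$-towers versus Theorem \ref{bredon2} --- is also the paper's. The genuine gap is that the decisive step is never carried out: you state that one must show that no assignment of the higher differentials can truncate all the towers, describe how you would organize that bookkeeping, and then acknowledge that you have not done it. That step \emph{is} the content of Proposition \ref{pppp}; without it nothing is proved, because a priori some choice of $d_3,d_5,\dots$ could leave an $E_\infty$ that is finite dimensional and vanishes above total degree $2n+2$, in which case Theorem \ref{bredon2} yields no contradiction. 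For instance, nothing in your write-up excludes the scenario in which $d_3$ is nonzero on the bottom anti-diagonal class and then $d_5[d^{2}]=t^{5}\otimes 1$: under that assignment the towers pair off and die in a way that a crude dimension count over $\Z_{2}[t]$ does not immediately rule out, so the contradiction really must come from pinning down the actual values of the differentials, which you never do.

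Two further points of comparison. First, your structural claim that ``each anti-diagonal class is indecomposable on $E_3$,'' which is what makes your plan look unboundedly hard, is wrong except for the bottom class: since $[d^{2}]$ survives to $E_3$, one has $[u\,d^{2j+1}]=[u\,d]\cdot[d^{2}]^{j}$ and $[cx\,d^{2j+1}]=[\nu]\cdot[u\,d]\cdot[d^{2}]^{j}$ with $\nu\in\{x,c\}$, so $E_3$ is generated over $\Z_{2}[t]$ by $[x],[c],[d^{2}],[u\,d]$; the first three have vanishing $d_3$ for degree and parity reasons (Lemma \ref{lll}, together with $E_3^{3,-1}=0$ and $E_3^{3,2}=0$ in the pattern (\ref{e3})), so by the Leibniz rule the entire question collapses to the single value $d_3[u\,d]\in E_3^{3,1}\cong\Z_{2}$ --- a one-bit case distinction, not open-ended bookkeeping. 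Second, the paper settles exactly this point: it argues that $d_3$ vanishes on the classes $t^{p}\otimes x d^{(q-1)/2}$ because ``$1\otimes d$ represents zero in $E_3^{0,2}$,'' concludes that all of $d_3$ is trivial, hence the sequence collapses at the infinite-dimensional $E_3$, contradicting Theorem \ref{bredon2}. Your observation that $1\otimes d$ is not a $d_2$-cocycle, so that this Leibniz manipulation needs justification, is a fair criticism of that step of the paper; but your proposal neither repairs nor replaces it --- it only relocates the same difficulty into an analysis you leave unfinished. To complete your proof you must either justify $d_3[u\,d]=0$ (after which collapse at $E_3$ finishes the argument exactly as in the paper) or explicitly exclude the one remaining case $d_3[u\,d]\neq 0$.
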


\begin{proof}
In this case, $d_{2}^{0,2}(1\otimes d)$ is a nonzero element in $E_{2}^{2,1}\cong H^{2}(B_{\Z_{2}})\otimes H^{1}(X),$ so we have the following possibilities:
\begin{itemize}
\item[(B1)] $d_{2}^{0,2}(1\otimes d)=t^{2}\otimes x;$
\item[(B2)] $d_{2}^{0,2}(1\otimes d)=t^{2}\otimes c;$
\item[(B3)] $d_{2}^{0,2}(1\otimes d)=t^{2}\otimes (x+c).$
\end{itemize}

We will show that these cases provide the same convergent limit for the spectral sequence and besides they cannot occur due to Theorem \ref{bredon2}. 

Let $t^{l}$ be the generator of $H^{l}(B_{\Z_{2}}).$ For the $B1$ case, note that\linebreak 
$d_{2}^{0,3}(1\otimes xd)=t^{2}\otimes x^{2}=0,$ $d_{2}^{0,3}(1\otimes cd)=t^{2}\otimes cx,$
and in general way  we have
$$d_{2}^{l,2k}(t^{l}\otimes d^{k})=(t^{l}\otimes 1)d_{2}^{0,2k}(1\otimes d^{k})=\left\{\begin{array}{ccl}
0,                     &\text{ if } & k\equiv 0(\text{mod }2),\\
t^{l+2}\otimes d^{k-1}x, &\text{ if } & k\equiv 1(\text{mod }2),
\end{array}\right.$$
$$d_{2}^{l,2k+1}(t^{l}\otimes cd^{k})=\left\{\begin{array}{ccl}
0,                     &\text{ if } & k\equiv 0(\text{mod }2),\\
t^{l+2}\otimes d^{k-1}cx, &\text{ if } & k\equiv 1(\text{mod }2),
\end{array}\right.$$
 $d_{2}^{l,2k+1}(t^{l}\otimes xd^{k})=0$ and $d_{2}^{l,2k+2}(t^{l}\otimes c^{2}d^{k})=0,$ for all $l\geq 0$ and $k\in\{0,\cdots,n\}.$ With this complete description of the differentials the $E_{3}-$term has the following pattern for $q\leq 2n$:
\begin{equation}\label{e3}
E_{3}^{p,q}=\left\{\begin{array}{rl}
\Z_{2},    &\text{ if } q=0\text{ or }q\equiv 3(\text{mod }4),\\
\Z_{2},    &\text{ if }q\equiv 1(\text{mod }4)\text{ and }p\geq 2,\\
\Z_{2}^{2},&\text{ if } q\neq 0\text{ and }q\equiv 0(\text{mod }4),\\
\Z_{2}^{2},&\text{ if }q\equiv 1(\text{mod }4)\text{ and }p=0,1,\\
\{0\},     &\text{ if }q\equiv 2(\text{mod }4)\text{ and }p\geq 2.
\end{array}\right.
\end{equation}

We affirm that all the differentials $d_{3}^{p,q}$ are null. %Obviously $d_{3}^{l,1}(t^{l}\otimes x)=0 $ and $d_{3}^{l,1}(t^{l}\otimes c)=0$ for all $l\geq 0,$ because the sequence is of the first quadrant. 

Indeed, if $q\equiv 0(\text{mod }4),$ then we have $q-2\equiv 2(\text{mod }4)$ and\linebreak $\text{im} d_{3}^{p,q}\subseteq E_{3}^{p+3,q-2}=\{0\},$ so $d_{3}^{p,q}$ is trivial.

If $q\equiv 1(\text{mod }4),$ note that 
$$E_{3}^{p,q}=\langle t^{p}\otimes cd^{(q-1)/2} \rangle \oplus \langle t^{p}\otimes xd^{(q-1)/2} \rangle,$$ 
for $p=0,1$ and $E_{3}^{p,q}=\langle t^{p}\otimes cd^{(q-1)/2} \rangle,$ for $p\geq 2.$ In this case $(q-1)/2$ is even, therefore, by Lemma \ref{lll}, in both cases for $\nu$ equal to $x$ or $\nu$ equal to $c,$ we have $d_{3}^{p,q}(t^{p}\otimes \nu d^{(q-1)/2})=0,$ so $d_{3}^{p,q}$ is trivial.

If $q\equiv 2(\text{mod }4),$ then $E_{3}^{p,q}=\langle t^{p}\otimes c^{2}d^{(q-2)/2}\rangle,$ for $p=0,1,$ and it is trivial for $p\geq 2.$ Again, by the parity of $(q-2)/2,$ we have $d_{3}^{p,q}=0.$

If $q\equiv 3(\text{mod }4),$ then $E_{3}^{p,q}=\langle t^{p}\otimes xd^{(q-1)/2}\rangle$ and the analysis of $d_{3}^{p,q}$ depends only on of $d_{3}^{0,q-1},$ since
$$d_{3}^{p,q}\left(t^{p}\otimes xd^{(q-1)/2}\right)=\left(t^{p}\otimes x\right)\cdot d_{3}^{0,q-1}\left(1\otimes d^{(q-1)/2}\right).$$

We affirm that $d_{3}^{0,q-1}(1\otimes d^{(q-1)/2})=0.$ In fact, as $1\otimes d$ represents the zero element in 
$$E_{3}^{0,2}=(\ker d_{2}^{0,2}/\text{im}d_{2}^{-2,2})=\langle 1\otimes c^{2}\rangle \cong (\langle 1\otimes c^{2}\rangle \oplus \langle 1\otimes d \rangle)/\langle 1\otimes d \rangle,$$ 
we have $d_{3}^{0,2}(1\otimes d)=0,$ and since $q-3\equiv 0(\text{mod }4),$ we concluded that $d_{3}^{0,q-3}$ is trivial. These facts together ensure that
$$
\begin{array}{rcl}
d_{3}^{0,q-1}(1\otimes d^{(q-1)/2})&=& d_{3}^{0,q-1}((1\otimes d)\cdot(1\otimes d^{(q-3)/2}))\\
                                   &=& \underbrace{d_{3}^{0,2}(1\otimes d)}_{0}(1\otimes d^{(q-3)/2})+(1\otimes d)\underbrace{d_{3}^{0,q-3}(1\otimes d^{(q-3)/2})}_{0}\\
																	 &=& 0.

\end{array}
$$

 For $B2,$ let us note that the differentials are given by the rules 
$$d_{2}^{l,2k}(t^{l}\otimes d^{k})=\left\{\begin{array}{ccl}
0,                     &\text{ if } & k\equiv 0(\text{mod }2),\\
t^{l+2}\otimes d^{k-1}c, &\text{ if } & k\equiv 1(\text{mod }2),
\end{array}\right.
$$
$$d_{2}^{l,2k+1}(t^{l}\otimes cd^{k})=d_{2}^{l,2k+1}(t^{l}\otimes xd^{k})=\left\{\begin{array}{ccl}
0,                     &\text{ if } & k\equiv 0(\text{mod }2),\\
t^{l+2}\otimes d^{k-1}c^{2}, &\text{ if } & k\equiv 1(\text{mod }2),
\end{array}\right.
$$
and $d_{2}^{l,2k+2}(t^{l}\otimes c^{2}d^{k})=0,$ for all $k\in\{0,\cdots,n\}.$ Therefore, the $E_{3}-$term of the spectral sequence has the same pattern of the case $B1,$ given in (\ref{e3}), and similarly, all the differentials $d_{3}^{p,q}$ are null.

For the case $B3,$ the differentials are given by 
$$d_{2}^{l,2k}(t^{l}\otimes d^{k})=\left\{\begin{array}{ccl}
0,                          &\text{ if }& k\equiv 0(\text{mod }2),\\
t^{l+2}\otimes (x+c)d^{k-1},&\text{ if }& k\equiv 1(\text{mod }2),\\
\end{array}\right.$$
$$d_{2}^{l,2k+1}(t^{l}\otimes xd^{k})=d_{2}^{l,2k+1}(t^{l}\otimes cd^{k})=\left\{\begin{array}{ccl}
0,                          &\text{ if }& k\equiv 0(\text{mod }2),\\
t^{l+2}\otimes c^{2}d^{k-1},&\text{ if }& k\equiv 1(\text{mod }2),\\
\end{array}\right.$$ 
and $d_{2}^{l,2k+2}(t^{l}\otimes c^{2}d^{k})=0$ for all $k\in\{0,\cdots,n\}.$ Then, the $E_{3}-$term also has the same pattern as in (\ref{e3}), and similarly we show that the all differentials $d_{3}^{p,q}$ are null.

Since all the differentials in these cases are trivial, this means that the sequence collapses at $E_{3}-$term, so $H^{j}(X/G)\cong \text{Tot}(E_{\infty})^{j}=\text{Tot}(E_{3})^{j},$ for all $j\geq 0,$ which implies that the cohomology of the orbit space have nonzero elements in all dimensions, which is a contradiction with Theorem \ref{bredon2}.
\end{proof}

From the previous results, we have only one possible cohomology ring structure for the orbit space $X/G,$ which is provided by the rule of the differentials in the case $A$ of the spectral sequence. 

Note that, under the hypothesis that $\pi_{1}(B_{G})$ acts trivially on\linebreak $H^{\ast}(Q(m,n)),$ then the same arguments of Proposition \ref{ppp} can be used to show that the cases $C$ to $F$ cannot occurs in this more general situation, due Lemma \ref{lll}, Moreover the tree possibles cases $B1,$ $B2$ and $B3$ also produce the same limit page in the spectral sequence. This means that in those cases we will have at most two possibles cohomology ring structures for the orbit space $Q(m,n)/G,$ given by similar cases to $A$ and $B.$ 

%%%%%%%%%%%%%%%%%%%%%%%%%%%%%%%%%%%%%%%%%%%%%%%%%%%%%%%%%%%%%%%%%%%%%%%%%%%%%%%%%%%%%%%%%%%%%%%%%%%%%%%%%%%%

\section{Proof of theorem 1.1}\label{Section5}

%%%%%%%%%%%%%%%%%%%%%%%%%%%%%%%%%%%%%%%%%%%%%%%%%%%%%%%%%%%%%%%%%%%%%%%%%%%%%%%%%%%%%%%%%%%%%%%%%%%%%%%%%%%%

In this section, we calculate the cohomology of the total space $X_{G},$ where $G=\Z_{2}$ and $X=Q(1,n),$ for $n\geq 3$ odd, which give us the cohomology algebra of the orbit space $X/G$ by the isomorphism (\ref{is}). By Propositions \ref{ppp} and \ref{pppp} we only need to assume the case $A,$ where $d_{3}^{0,1}(1\otimes x)=d_{3}^{0,1}(1\otimes c)=0$ and $d_{3}^{0,2}(1\otimes d)=t^{3}\otimes 1.$ In this case, for $t^{l}$ the generator of $H^{l}(B_{G})$ and $k\in\{0,\cdots,n\},$ we have that 
$$d_{3}^{l,2k}(t^{l}\otimes d^{k})=\left\{
\begin{array}{ccc}
t^{l+3}\otimes d^{k-1}, &\text{ if }& k\equiv 1(\text{mod }2),\\
0,                      &\text{ if }& k\equiv 0(\text{mod }2),
\end{array}\right. ,$$ 
$$
d_{3}^{l,2k+1}(t^{l}\otimes xd^{k})=\left\{\begin{array}{ccc}
t^{l+3}\otimes xd^{k-1}, &\text{ if }& k\equiv 1(\text{mod }2),\\
0,                      &\text{ if }& k\equiv 0(\text{mod }2),
\end{array}\right. $$ 
$$
d_{3}^{l,2k+i}(t^{l}\otimes c^{i}d^{k}) =\left\{\begin{array}{ccc}
t^{l+3}\otimes c^{i}d^{k-1}, &\text{ if }& k\equiv 1(\text{mod }2),\\
0,                      &\text{ if }& k\equiv 0(\text{mod }2),
\end{array}\right. \text{ for }i=1,2.$$

Thus, we get an explicit expression for the differentials $d_{3}^{p,q},$ which enables us to obtain the $E_{4}-$term of the spectral sequence, which have the following pattern, for $q\leq 2n $:
$$ E_{4}^{p,q}=\left\{\begin{array}{rl}
\Z_{2}, & \text{ if } q\equiv 0(\text{mod }4) \text{ or }q\equiv 2(\text{mod }4), \\
\Z_{2}^{2}, & \text{ if }q\equiv 1(\text{mod }4),\\
\{0\},  & \text{ if }q\equiv 3(\text{mod }4) ,
\end{array}\right.\text{ for }p=0,1,2,$$
and $E_{4}^{p,q}=\{0\},$ for all $p\geq 3.$

Therefore, $\text{im}d_{4}^{p,q}\subseteq E_{4}^{p+4,q-3}=\{0\}$ and the sequence collapses at $E_{4}-$term, that is, $E_{\infty}\cong E_{4},$ and 
$$H^{j}(X/G)\cong H^{j}(X_{G})\cong \text{Tot}(E_{\infty})^{j}=\bigoplus_{p+q=j}E_{\infty}^{p,q}= E_{\infty}^{0,j}\oplus E_{\infty}^{1,j-1}\oplus E_{\infty}^{2,j-2},
$$	
or, more specifically, we have the following additive structure of $H^{\ast}(X/G),$
$$H^{j}(X/G)\cong \left\{\begin{array}{ccl}
\Z_{2},&\text{ if }& j=0\text{ or }j=2n+2,\\
\Z_{2}^{2},&\text{ if }&j\equiv 0(\text{mod }4),\text{ for } 0<j<2n+2,\\
\Z_{2}^{3},&\text{ if }&j\equiv 1(\text{mod }4),\text{ for } 0<j<2n+2,\\
\Z_{2}^{4},&\text{ if }&j\equiv 2(\text{mod }4),\text{ for } 0<j<2n+2,\\
\Z_{2}^{3},&\text{ if }&j\equiv 3(\text{mod }4),\text{ for } 0<j<2n+2,\\
0,         &\text{ if }&j>2n+2.
\end{array}\right.$$	

To obtain the ring structure of the orbit space, we will use the homomorphisms $\pi^{\ast}$ and $i^{\ast},$ as given by (\ref{hom1}) and (\ref{hom2})).

Let us consider $\alpha=t\otimes 1\in E^{1,0}_{\infty}.$ By (\ref{hom1}), we have $\pi^{\ast}(t)=\alpha\in H^{1}(X_{G})$ and $\alpha^{l}=\pi^{\ast}(t^{l})\in E^{l,0}_{\infty},$  that is, $\alpha^{l}=0,$ for $l\geq 3$ and $\alpha^{l}\neq 0,$ for $l=1,2.$

The elements $1\otimes c,\, 1\otimes x \in E_{2}^{0,1}$ and $1\otimes d^{2}\in E_{2}^{0,4}$ are permanent co-cycles and they determine nonzero elements $y,z\in E_{\infty}^{0,1}$ and $w\in E_{\infty}^{0,4},$ respectively, with $y^{l}=z^{l}=0,$ for $l\geq 3,$ and $w^{k}=0,$ for $k\geq (n+1)/2.$
%$y^{3},z^{3}\in E_{\infty}^{0,3}=\{0\},$ $w^{\frac{n+1}{2}}\in E_{\infty}^{0,2n+2}=\{0\},$

By (\ref{hom2}), there are elements $\beta,\gamma\in H^{1}(X_{G})$ that represent $y$ and $z,$ respectively, such that $i^{\ast}(\beta)=c$ and $i^{\ast}(\gamma)=x,$ where $i^{\ast}$ is given by

\begin{equation}
H^{1}(X_{G})\twoheadrightarrow E_{\infty}^{0,1}\cong E_{4}^{0,1}\cong E_{3}^{0,1}\cong E_{2}^{0,1}\cong H^{1}(X).
\end{equation}

Since $H^{1}(X_{G})\cong \Z_{2}^{3}$ and $i^{\ast}(\alpha)=i^{\ast}	\circ \pi^{\ast}(t)=0,$ then $\alpha\neq \beta\neq \gamma$ and 
\begin{equation}
H^{1}(X_{G})\cong \langle\alpha \rangle\oplus \langle \beta\rangle\oplus  \langle\gamma \rangle.
\end{equation} 

In the $E_{\infty}-$term we have $z^{2}=0,$ $y^{2}=zy$ and $y^{2}z\in E_{\infty}^{0,3}=\{0\},$ then $\gamma^{2}=0$ and $\beta^{2}=\beta\gamma.$ Therefore, 
\begin{equation}
H^{2}(X_{G})\cong \langle \alpha^{2}\rangle\oplus \langle \alpha\beta\rangle \oplus\langle \alpha\gamma\rangle \oplus \langle \beta\gamma=\beta^{2}\rangle.
\end{equation} 

Similarly, we have 
\begin{equation}
H^{3}(X_{G})\cong \langle \alpha\beta^{2}=\alpha\beta\gamma \rangle\oplus\langle \alpha^{2}\beta\rangle \oplus \langle \alpha^{2}\gamma\rangle,
\end{equation} 
where $\beta^{2}\gamma=0.$ 

Again by (\ref{hom2}), there is an element $\delta\in H^{4}(X_{G})\cong \Z_{2}^{2}$ which re\-presents $w$ and such that $i^{\ast}(\delta)=d^{2},$ since $1\otimes dc^{2}\in E_{3}^{0,4}$ it is not a permanent co-cycle and $i^{\ast}$ is factored through the inclusions as follows, 
\begin{equation}\label{inc1}
H^{4}(X_{G})\twoheadrightarrow E_{\infty}^{0,4}\cong E_{4}^{0,4}\subseteq E_{3}^{0,4}\cong E_{2}^{0,4}\cong H^{4}(X),
\end{equation}
therefore, 
\begin{equation}
H^{4}(X_{G})\cong \langle \alpha^{2}\beta^{2}=\alpha^{2}\beta\gamma\rangle \oplus \langle \delta\rangle, 
\end{equation}
and $\delta^{\frac{n+1}{2}}=0.$

Let us also observe that $w^{\frac{n-1}{2}} x^{2}y^{2}=w^{\frac{n-1}{2}} x^{2}yz\in E_{\infty}^{2,2n}\neq \{0\}$ 
and $E_{\infty}^{l,m}=\{0\},$ for all $l\geq 0$ and $m\geq 2n+1,$ that is, in general $\delta^{i}\alpha^{j}\beta\gamma\neq 0,$ for all $0\leq i\leq (n-1)/2$ and $j=1,2,$ with the generator $\delta^{\frac{n-1}{2}}\alpha^{2}\beta^{2}$ in the top dimension. Thus, $H^{\ast}(X/G)$ is isomorphic to the following polynomial graded algebra
$$\frac{\Z_{2}[\alpha,\beta,\gamma,\delta]}{\langle \alpha^{3}, \beta^{3}, \gamma^{2},\beta^{2}+\beta\gamma, \delta^{\frac{n+1}{2}}\rangle},$$ 
which completes the proof of Theorem \ref{main}.

% For alignments use AmS-LaTeX constructions not \eqnarray.

%\ack % or \acks
% Put acknowledgements here


\begin{thebibliography}{99}

\bibitem{bredon} G. Bredon, \textit{Introduction to Compact Transformation Groups} (Academic Press, New York, 1972).

\bibitem{Conner} P. E. Conner and E.E. Floyd, \textit{Differentiable Periodic Maps}. Ergebnisse Series, 33 (Springer, Berlin - Heidelberg, 1964).


\bibitem{pinka} P. Dey, Free actions of finite  groups on products of Dold manifolds, arXiv:1809.02307, April 2019.


\bibitem{DeySingh} P. Dey and M. Singh. Free actions of some compact groups on Milnor manifolds. \textit{Glasgow Math. Journal}, 61 (2019), 727-742.


\bibitem{dieck87} T. Dieck, \textit{Transformation Groups} (Walter de Gruyter, Berlin, 1987).


\bibitem{dold56} A. Dold, Erzeugende der Thomschen Algebra $\mathfrak{R}$, \textit{Math. Z.} 65 (1956), 25-35.


\bibitem{ronald} R. M. Dotzel, T. B. Singh, and S. P. Tripathi, The cohomology rings of the orbit spaces of free transformation groups of the product of two spheres, \textit{Proc. Amer. Math. Soc.} 129 (3) (2001), 921-930.



\bibitem{hopf} H. Hopf, Zum Clifford-Kleinschen Raumproblem, \textit{Math. Ann.} 95 (1), (1926), 313-339.


\bibitem{jahren} B. Jahren and S. L. Kwasik, Free involutions on $S^{1}\times S^{n},$ \textit{Math. Ann.} 351 (2) (2011), 281-303.


\bibitem{Khare} S. S. Khare, On Bounding and Independence of Wall Manifolds. \textit{J. Indian Math. Soc.} 71 (2004), 195-201.


\bibitem{mcleary01} J. Mccleary, \textit{A User's Guide to Spectral Sequences}, 2nd ed. (Cambridge University Press, New York,  2001).	


\bibitem{milnor56}  J. Milnor, Construction of Universal Bundles. II, \textit{Ann. of Math.} 63 (1956), 430-436.

\bibitem{milnor57} J. Milnor, Groups which act on $S^{n}$ without fixed points, \textit{Amer. J. Math.} 79 (3), (1957),
623-630.

\bibitem{milnor65} J. Milnor, On the Stiefel-Whitney numbers of complex manifolds and spin manifolds, \textit{Topology} 3, (1965), 223-230.


\bibitem{ana} A. M. M. Morita, D. Mattos and P. Pergher, The cohomology ring of orbit spaces of free $\Z_{2}-$actions on some  Dold manifolds, \textit{Bull. Aust. Math. Soc.} 97 (2018), 340-348. 

\bibitem{Mukerjee03} H. K. Mukerjee, Classification of homotopy Dold manifolds, \textit{New York J. Math.} 9, (2003), 1-23.

\bibitem{Mukerjee04} H. K. Mukerjee, Classification of homotopy real Milnor manifolds, \textit{Topology Appl.} 139, (2004), 151-184.

\bibitem{Mukerjee06} H. K. Mukerjee, Classification of homotopy Wall's manifolds, \textit{Topogy and its Applications}, 153, (2006), 3467-3495.

\bibitem{Pergher} P. L. Q. Pergher, H. K. Singh and T. B. Singh, On $\Z_{2}$ and $S^{1}$ free actions on spaces of cohomology
type $(a,b)$, \textit{Houston J. Math.} 36 (1) (2010), 137-146.

\bibitem{singh13} M. Singh, Cohomology algebra of orbit spaces of free involutions on lens spaces, \textit{J. Math. Soc. Japan.} 65 (4), (2013), 1055-1078.

\bibitem{wall60} C.T.C. Wall, Determination of the Cobordism Ring, \textit{Ann. of Math.} 72 (1960), 292-317.

\end{thebibliography}
\end{document}